\numberwithin{equation}{section}
\theoremstyle{plain}
\newtheorem{thm}{Theorem}[section]
\newtheorem{prop}[thm]{Proposition}
\newtheorem{cor}[thm]{Corollary}
\newtheorem{de}[thm]{Definition}
\newtheorem{rem}[thm]{Remark}
\theoremstyle{definition}
\newtheorem{exam}[thm]{Example}
\newcommand{\brac}[1]{\left(#1\right)}
\DeclareMathOperator{\diag}{diag}
\begin{document}
\baselineskip=1.6pc

\vspace{.5in}

\begin{center}

{\large\bf Machine learning moment closure models for the radiative transfer equation II: enforcing  global hyperbolicity in gradient based closures}

\end{center}

\vspace{.1in}

\centerline{
Juntao Huang \footnote{Department of Mathematics,
Michigan State University, East Lansing, MI 48824, USA.
E-mail: huangj75@msu.edu} \quad
Yingda Cheng  \footnote{Department of Mathematics, Department of Computational Mathematics, Science and Engineering, Michigan State University, East Lansing, MI 48824, USA. E-mail: ycheng@msu.edu. Research is supported by NSF grants    DMS-2011838 and AST-2008004.} \quad
Andrew J. Christlieb \footnote{Department of Computational Mathematics, Science and Engineering, Michigan State University, East Lansing, Michigan 48824, USA. E-mail: christli@msu.edu. Research is supported by: AFOSR grants FA9550-19-1-0281 and FA9550-17-1-0394; NSF grants DMS-1912183 and AST-2008004; and DoE grant DE-SC0017955.} \quad
Luke F. Roberts \footnote{National Superconducting Cyclotron Laboratory and Department of Physics and Astronomy, Michigan State University, East Lansing, MI 48824, USA. E-mail: robertsl@nscl.msu.edu. Research is supported by: NSF grant AST-2008004; and DoE grant DE-SC0017955.} \quad
Wen-An Yong \footnote{Department of Mathematical Sciences, Tsinghua University, Beijing 100084, China.
E-mail: wayong@tsinghua.edu.cn}
}

\vspace{.4in}

\centerline{\bf Abstract}
\vspace{.1in}

This is the second paper in a series in which we develop machine learning (ML) moment closure models for the radiative transfer equation (RTE). In our previous work \cite{huang2021gradient}, we proposed an approach to directly learn the gradient of the unclosed high order moment, which performs much better than learning the moment itself and the conventional $P_N$ closure. However, the ML moment closure model in \cite{huang2021gradient} is not able to guarantee hyperbolicity and long time stability. We propose in this paper a method to enforce the global hyperbolicity of the ML closure model. The main idea is to seek a symmetrizer (a symmetric positive definite matrix) for the closure system, and derive constraints such that the system is globally symmetrizable hyperbolic. It is shown that the new ML closure system inherits the dissipativeness of the RTE and preserves the correct diffusion limit as the Knunsden number goes to zero. Several benchmark tests including the Gaussian source problem and the two-material problem show the good accuracy, long time stability and generalizability of our globally hyperbolic ML closure model.

\bigskip

\bigskip

{\bf Key Words: radiative transfer equation; moment closure; machine learning; neural network; hyperbolicity; long time stability}


\pagenumbering{arabic}
\newpage
\section{Introduction}\label{sec:intro}
\setcounter{equation}{0}
\setcounter{figure}{0}
\setcounter{table}{0}
In this paper we introduce an  extension to our previous work on ML closures for radiative transfer  modeling \cite{huang2021gradient}. The new approach offers long time stability by ensuring mathematical consistency between the closure and the macroscopic model.  Further, the numerical results in section 4 demonstrate the plausibility of capturing kinetic effects in a moment system with a handful of moments and an appropriate closure model.  

The study of radiative transfer is of vital importance in many fields of science and engineering including astrophysics \cite{pomraning2005equations}, heat transfer \cite{koch2004evaluation}, and optical imaging \cite{klose2002optical}. The fundamental equation describing radiative transfer is an integro-differential equation, termed as the radiative transfer equation (RTE). Nevertheless, in most cases, any mesh based numerical discretization of the RTE equation leads to unacceptable computational costs due to the curse of dimensionality.

Moment methods study the evolution of a finite number of moments of the specific intensity in the RTE. Typically, the evolution of a lower-order moment depends on a higher-order moment, leading to what is known as the moment closure problem. Hence, one has to introduce suitable closure relations that relates the highest order moment included with the lower order moments in order to get a closed system of equations.  Many moment closure models have been developed, including the $P_N$ model \cite{chandrasekhar1944radiative}; the variable Eddington factor models \cite{levermore1984relating,murchikova2017analytic}; the entropy-based $M_N$ models \cite{hauck2011high,alldredge2012high,alldredge2014adaptive};  the positive $P_N$ models \cite{hauck2010positive}; the filtered $P_N$ models \cite{mcclarren2010robust,laboure2016implicit}; the $B_2$ models \cite{alldredge2016approximating}; and the $MP_N$ model \cite{fan2020nonlinear,fan2020nonlinear2,li2021direct}. 

In moment closure problems,   hyperbolicity is a critical issue, which is essential for a system of first-order partial differential equations (PDEs) to be well-posed \cite{serre1999systems}. The pioneering work on the moment closure for the gas kinetic theory by Grad in \cite{grad1949kinetic} is the most basic one among the moment models. However, it was discovered in \cite{cai2014hyperbolicity} that the equilibrium is on the boundary of the hyperbolicity region for the Grad's 13-moment model in the three-dimensional case. Due to such a deficiency, the application of the moment method is severely restricted. This issue also attracts a lot of attention, with many papers in the literature focusing on the development of globally hyperbolic moment systems \cite{cai2014globally,fan2020nonlinear,fan2020nonlinear2,li2021direct}. 

The traditional trade off in introducing a closure relation and solving a moment model instead of a kinetic equation is generic accuracy verses practical computability.
However, thanks to the rapid development of machine learning (ML)  and data-driven modeling \cite{brunton2016discovering,raissi2019physics,han2018solving}, a new approach to solve the moment closure problem has emerged based on ML \cite{han2019uniformly,scoggins2021machine,huang2020learning,bois2020neural,ma2020machine,wang2020deep,maulik2020neural,huang2021gradient}. This approach offers a path for multi-scale problems that is relatively unique, promising to capture kinetic effects in a  moment model with only a handful of moments.  For the detailed literature review, we refer readers to \cite{huang2021gradient}. We only remark that most of the works mentioned above are not able to guarantee hyperbolicity or long time stability, except the work in \cite{huang2020learning}. In \cite{huang2020learning}, based on the conservation-dissipation formalism \cite{zhu2015conservation} of irreversible thermodynamics, the authors proposed a stable ML closure model with hyperbolicity and Galilean invariance for the Boltzmann BGK equation. Nevertheless, their model is limited to one extra non-equilibrium variable and it is still not clear how to generalize to arbitrary number of moments.

The work in this paper is a continuation of the previous work in \cite{huang2021gradient}, where we proposed to directly learn a closure that relates the gradient  of the   highest order moment to gradients of lower order moments. This new approach is consistent with the exact closure we derived for the free streaming limit and also provides a natural output normalization \cite{huang2021gradient}. A variety of numerical tests show that the ML closure model in \cite{huang2021gradient} has better accuracy than an ML closure based on learning a  relation between the moments, as opposed to a relation between the gradients, and the conventional $P_N$ closure \cite{huang2021gradient}. However, it is not able to guarantee hyperbolicity and long time simulations are not always satisfactory. Consequently, the focus of this work is to develop a method to enforce global hyperbolicity of the ML closure model.
The main idea  is motivated by the  observation that the coefficient matrix for the $P_N$ closure is a tridiagonal matrix with positive off-diagonal entries. This indicates the existence of a diagonal symmetrizer matrix such that the $P_N$ closure is symmetrizable hyperbolic. 
Motivated by this observation, we propose to only keep the last several components in the ansatz, see equation \eqref{eq:anstaz-learn-gradient} in Section \ref{sec:sub-hyperbolic-closure}, and seek a symmetrizer, which is a symmetric positive definite (SPD) matrix, of a block diagonal form. For degrees of freedom no larger than four in the ansatz, relating the gradient of the $(N+1)^{th}$ moment to the gradient of the $N^{th}$, $(N-1)^{th}$, $(N-2)^{th}$, and $(N-3)^{th}$ moments, we derive constraints to guarantee the hyperbolicity of the ML closure model, see Theorem \ref{thm:hyperbolic} in Section \ref{sec:sub-hyperbolic-closure}. Moreover, due to the block diagonal structure of the symmetrizer, we show that the ML closure system also satisfies the structural stability condition in \cite{yong1999singular} when taking into account the relaxation effects of the source terms, see Theorem \ref{thm:structure-stability} in Section \ref{sec:sub-hyperbolic-closure}. This condition characterises the dissipation feature of the moment closure system and is analogous to the H-theorem for the Boltzmann equation. We also consider our moment closure system under a diffusive scaling and formally show that our model preserves the correct diffusion limit as the Knunsden number goes to zero. The justification of the formal asymptotic analysis could be rigorously verified \cite{peng2016uniform,lattanzio2001hyperbolic}, with the aid of the structural stability condition. We also remark that, unlike \cite{huang2020learning}, the work in this paper constructs ML closure models that are hyperbolic with an arbitrary number of moments. We numerically tested that the hyperbolic ML closure model has good accuracy in a variety of numerical examples. Moreover, the non-hyperbolic ML closure model in \cite{huang2021gradient} blows up for long time simulations, while the hyperbolic one remains stable and has good accuracy.

The remainder of this paper is organized as follows. In Section \ref{sec:moment-method}, we introduce the hyperbolic ML moment closure model and discuss the diffusion limit of the model. In Section \ref{sec:training}, we present the details in the training of the neural networks. The effectiveness of our ML closure model is demonstrated through extensive numerical results in Section \ref{sec:numerical-test}. Some concluding remarks are given in Section \ref{sec:conclusion}.

\section{ML moment closure for radiative transfer equation}\label{sec:moment-method}

In this section, we first review the ML   moment closure method for the RTE in slab geometry proposed in \cite{huang2021gradient} and propose our approach to enforce the hyperbolicity of the ML moment closure method. We then formally show that the resulting closure model can capture the correct diffusion limit.

\subsection{Hyperbolic ML closure model}\label{sec:sub-hyperbolic-closure}

We consider the time-dependent RTE for a gray medium in slab geometry:
\begin{equation}\label{eq:rte}
	\partial_t f + v \partial_x f = {\sigma_s}\brac{\frac{1}{2} \int_{-1}^1 f dv - f} - \sigma_a f, \quad -1\le v\le 1
\end{equation}
Here, $f=f(x,v,t)$ is the specific intensity of radiation. The variable $v\in[-1, 1]$ is the cosine of the angle between the photon velocity and the $x$-axis. $\sigma_s = \sigma_s(x)\ge 0$ and $\sigma_a = \sigma_a(x)\ge 0$ are the scattering and absorption coefficients.

Denote the $k$-th order Legendre polynomial by $P_k = P_k(x)$. Define the $k$-th order moment by
\begin{equation}
	m_k(x,t) = \frac{1}{2} \int_{-1}^1 f(x,v,t) P_k(v) dv.
\end{equation}
Multiplying by $P_k(v)$ on both sides of \eqref{eq:rte} and integrating over $v\in[-1, 1]$, we derive the moment equations:
\begin{equation}
\begin{aligned}
	 \partial_t m_0 + \partial_x m_1 &= -  \sigma_a m_0 \\
	 \partial_t m_1 + \frac{1}{3} \partial_x m_0 + \frac{2}{3} \partial_x m_2  &= - ( \sigma_s +  \sigma_a) m_1 \\
	& \cdots \\
	 \partial_t m_N + \frac{N}{2N+1} \partial_x m_{N-1} + \frac{N+1}{2N+1} \partial_x m_{N+1}  &= - ( \sigma_s +  \sigma_a) m_N \\
\end{aligned}
\end{equation}

The learned gradient approach proposed in \cite{huang2021gradient} is to find  a relation between $\partial_x m_{N+1}$ and the gradients on lower order moments:
\begin{equation}\label{eq:anstaz-learn-gradient}
	\partial_x m_{N+1} = \sum_{i=0}^N \mathcal{N}_i(m_0,m_1,\cdots,m_N) \partial_x m_i
\end{equation}
with $\mathcal{N}=(\mathcal{N}_0, \mathcal{N}_1, \cdots, \mathcal{N}_N):\mathbb{R}^{N+1}\rightarrow\mathbb{R}^{N+1}$ approximated by a neural network and learned from data. In this way, we obtain a quasi-linear first-order systems. This approach is shown to have uniform accuracy in the optically thick regime, intermediate regime and the optically thin regime. Moreover, the accuracy of this gradient-based model is much better than the approach based on creating a ML closure directly trained to match the moments, as well as the conventional $P_N$ closure. However, the learned gradient model exhibits numerical instability due to the loss of hyperbolicity \cite{huang2021gradient}. This severely restricts the application of this model, especially for long time simulations.

The main idea of this work is to enforce the hyperbolicity  by demanding that the coefficient matrix of the closure system $A$ is real diagonizable. For this purpose, we seek a SPD matrix $A_0$ (also called a symmetrizer) such that $A_0 A$ is a symmetric matrix. Namely, the system is symmetrizable hyperbolic, see the details in Appendix \ref{sec:appendix-hyperbolic}. This places certain constraints on the functions $\mathcal{N}_i$.

Plugging \eqref{eq:anstaz-learn-gradient} into the moment closure system, we have:
\begin{equation*}
	\partial_t m_N + \frac{N}{2N+1} \partial_x m_{N-1} + \frac{N+1}{2N+1} \brac{\sum_{k=0}^N \mathcal{N}_k(m_0,m_1,\cdots,m_N) \partial_x m_k}  = - ( \sigma_s +  \sigma_a) m_N.
\end{equation*}
Then, we can write down the coefficient matrix of this moment closure system:
\begin{equation}\label{eq:jacobi-matrix}
	A = 
	\begin{pmatrix}
    0 				& 	1 				&	0  			& 0  	&	\dots 	& 	0 	\\
    \frac{1}{3} 	& 	0 				& \frac{2}{3} 	& 0  	&	\dots 	& 	0	\\
    0 & \frac{2}{5} & 	0 				& \frac{3}{5} 	& \dots & 0 \\
    \vdots & \vdots &   \vdots 			& \ddots & \vdots & \vdots \\
    0 & 0 & 	\dots 				& \frac{N-1}{2N-1} 	& 0 & \frac{N}{2N-1} \\
    a_0 & a_1 & \dots & a_{N-2} & a_{N-1} & a_N
	\end{pmatrix}
\end{equation}
with
\begin{equation}
\label{acoef}
	a_j = 
	\left\{
	\begin{aligned}
	& \frac{N+1}{2N+1}\mathcal{N}_j, \quad & j\ne N-1 \\
	& \frac{N}{2N+1} + \frac{N+1}{2N+1}\mathcal{N}_j, \quad & j=N-1
	\end{aligned}
	\right.
\end{equation}

We first observe that, for the $P_N$ closure, i.e. when $\mathcal{N}_j=0$ for $0\le j\le N$, $A$ is a tridiagonal matrix with positive off-diagonal entries. In this case, $A_0$ can be taken as a diagonal matrix:
\begin{equation}
	A_0 = \diag(1, 3, 5, \dots, 2N+1),
\end{equation}
and $A_0 A$ is a tridiagonal symmetric matrix:
\begin{equation}
	A_0 A =
	\begin{pmatrix}
    0 	& 	1 	&	0  	& 	0  	&	\dots 	& 	0 	\\
    1	& 	0 	& 	2 	& 	0  	&	\dots 	& 	0	\\
    0 	&	2 	& 	0 	&	3	&	\dots & 0 \\
    \vdots & \vdots &   \vdots 			& \ddots & \vdots & \vdots \\
    0 	& 	0 	& 	\dots 				& N-1 	& 0 & N \\
    0 	& 	0 	& \dots & 0 & N & 0
	\end{pmatrix}	
\end{equation}

Inspired by this observation, we propose to first investigate the case when the ansatz is given by only   the last $(k+1)$ components 
\begin{equation}\label{eq:anstaz-partial-learn-gradient}
	\partial_x m_{N+1} = \sum_{i=N-k}^N \mathcal{N}_i(m_0,m_1,\cdots,m_N) \partial_x m_i,
\end{equation}
where $k$ is a parameter and $k \le N$. We will find  a SPD matrix $A_0$ of the form
\begin{equation}
	A_0 =
	\begin{pmatrix}
    D 	& 	0 	\\
	0 	& 	B 	
	\end{pmatrix}	
\end{equation}
with $D=\diag(1,3,\cdots,2(N+1-k)-1)\in\mathbb{R}^{(N+1-k)\times(N+1-k)}$ and $B\in\mathbb{R}^{k\times k}$ being a SPD matrix. 
With some algebraic calculation (details given in the appendix), when $k=3$, we can derive the constraints explicitly as shown in the following theorem.
\begin{thm}\label{thm:hyperbolic}
	Consider  matrix $A\in\mathbb{R}^{(N+1)\times(N+1)}$ with $N\ge3$ of the form
	\begin{equation}\label{eq:thm-jacobi-matrix}
		A = 
		\begin{pmatrix}
	    0 				& 	1 & 0		& 0				&	0  			& 0  	&	\dots 	& 	0 	\\
	    \frac{1}{3} 	& 	0 & \frac{2}{3} & 0		& 0	& 0  	&	\dots 	& 	0	\\
	    0 & \frac{2}{5} & 	0 & \frac{3}{5} & 0		& 0	& \dots & 0 \\
	    \vdots & \vdots &   \vdots 	&   \vdots	&   \vdots	& \ddots & \vdots & \vdots \\
		0 & 0 & 	\dots 	& \frac{N-3}{2N-5} 	& 0 & \frac{N-2}{2N-5} & 0 & 0\\
	    0 & 0 & 	\dots 	& 0		& \frac{N-2}{2N-3} 	& 0 & \frac{N-1}{2N-3} & 0 \\
	    0 & 0 & 	\dots 	& 0		& 0	& \frac{N-1}{2N-1} 	& 0 & \frac{N}{2N-1} \\
	    0 & 0 & \dots & 0 & a_{N-3} & a_{N-2} & a_{N-1} & a_N
		\end{pmatrix}
	\end{equation}
	where $a_i$ has been specified in \eqref{acoef}.
	If the coefficients $a_i$ for $i=N-3,N-2,N-1,N$ satisfy the following constraints:
	\begin{equation}\label{eq:constraint-dof4}
	\begin{aligned}
		& a_{N-3} > -\frac{(N-1)(N-2)}{N(2N-3)}, \\
		& a_{N-1} > \frac{g(a_{N-3},a_{N-2},a_{N};N)}{(N-2) (a_{N-3}(2N-3)N + (N-1)(N-2))^2}
	\end{aligned}
	\end{equation}
	where $g=g(a_{N-3},a_{N-2},a_{N};N)$ is a function given by
	\begin{equation*}
	\begin{aligned}
		g={}& a_{N-3}^3 (N-1) N^2 (3-2 N)^2 + a_{N-2} (2 N-1) (N-2)^3 (a_{N-2} N-a_N (N-1))\\
		&+a_{N-3} (N-2)^2 (a_N (4N^2-8N+3) (a_{N-2} N-a_N (N-1))+(N-1)^3)\\
		&+2 a_{N-3}^2 (N-1)^2 N (2 N-3) (N-2),
	\end{aligned}
	\end{equation*}
	then there exist a SPD matrix $A_0=\diag(D, B)\in\mathbb{R}^{(N+1)\times(N+1)}$ such that $A_0 A$ is symmetric. Here, $D=\diag(1,3,5,\cdots,2N-5)\in\mathbb{R}^{(N-2)\times(N-2)}$ and $B\in\mathbb{R}^{3\times3}$ is a SPD matrix.
 Moreover, \eqref{eq:constraint-dof4}  is equivalent to the following constraints on $\mathcal{N}_i$:
	\begin{equation}\label{eq:constraint-dof4-equivalent}
	\begin{aligned}
		\mathcal{N}_{N-3} &\ge - \frac{(N-2) (N-1) (2 N+1)}{N(2N-3)(N+1)}, \\
		\mathcal{N}_{N-1} &\ge - \frac{N}{N+1} + \frac{h(\mathcal{N}_{N-3},\mathcal{N}_{N-2},\mathcal{N}_{N};N)}{(N-2) (\mathcal{N}_{N-3} (N+1) (2 N-3) N + (N-2) (N-1) (2 N+1))^2}
	\end{aligned}
	\end{equation}
	with $h=h(\mathcal{N}_{N-3},\mathcal{N}_{N-2},\mathcal{N}_{N};N)$ is a function given by
	\begin{equation}\label{eq:constraint-numerator}
	\begin{aligned}
		h ={}& \mathcal{N}_{N-3}^3 (N-1) N^2 (-2 N^2+N+3)^2  \\
		&+ \mathcal{N}_{N-2} (N+1) (2 N-1) (2 N+1) (N-2)^3 (\mathcal{N}_{N-2} N-\mathcal{N}_N (N-1)) \\
		& +\mathcal{N}_{N-3} (N-2)^2 (\mathcal{N}_N (N+1)^2 (4 N^2-8N+3) (\mathcal{N}_{N-2} N - \mathcal{N}_N (N-1))+(2 N+1)^2 (N-1)^3) \\
		& +2 \mathcal{N}_{N-3}^2 (N-1)^2 N (N+1) (2 N-3) (2 N+1) (N-2).
	\end{aligned}
	\end{equation}
\end{thm}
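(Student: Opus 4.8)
The plan is to exploit the $2\times2$ block structure of $A_0=\diag(D,B)$ dictated by the splitting of indices into the ``head'' $\{0,\dots,N-3\}$, carrying the fixed diagonal $D$, and the ``tail'' $\{N-2,N-1,N\}$, carrying the unknown block $B$. I would partition $A$ conformally as $A=\left(\begin{smallmatrix}A_{11}&A_{12}\\A_{21}&A_{22}\end{smallmatrix}\right)$, so that $A_0A=\left(\begin{smallmatrix}DA_{11}&DA_{12}\\ BA_{21}&BA_{22}\end{smallmatrix}\right)$ is symmetric if and only if (i) $DA_{11}$ is symmetric, (ii) $BA_{22}$ is symmetric, and (iii) $(DA_{12})^{T}=BA_{21}$. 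Condition (i) is exactly the $P_N$ computation recalled before the theorem: $A_{11}$ is tridiagonal with the standard moment entries and $D=\diag(1,3,\dots,2N-5)$, so $(DA_{11})_{k,k+1}=(DA_{11})_{k+1,k}=k+1$ and (i) holds automatically, placing no restriction on $B$. I would then record the sparsity of the coupling blocks: $A_{12}$ has a single nonzero entry $\tfrac{N-2}{2N-5}$ (coupling $m_{N-3}$ to $m_{N-2}$), while $A_{21}$ is nonzero only in its last column, with entries $\tfrac{N-2}{2N-3}$ and $a_{N-3}$. This is what makes (iii) collapse to just three scalar equations.

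Writing $B=(b_{ij})$ symmetric, the next step is to turn (ii) and (iii) into a linear system for the six entries of $B$. The three equations in (iii) come from matching the last column, namely $B\,(\tfrac{N-2}{2N-3},0,a_{N-3})^{T}=(N-2,0,0)^{T}$; these express $b_{11},b_{12},b_{13}$ in terms of $b_{23},b_{33}$ and carry the inhomogeneous constant $N-2$. In particular one obtains the clean relation
\begin{equation*}
b_{11}=(2N-3)+\frac{(2N-3)^2}{(N-2)^2}\,a_{N-3}^2\,b_{33},
\end{equation*}
which I expect to be the key to reducing the number of binding inequalities below. The three off-diagonal identities from (ii), with $A_{22}=\left(\begin{smallmatrix}0&\frac{N-1}{2N-3}&0\\ \frac{N-1}{2N-1}&0&\frac{N}{2N-1}\\ a_{N-2}&a_{N-1}&a_N\end{smallmatrix}\right)$, then close the system. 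Because the inhomogeneous term survives the substitution, the reduced $3\times3$ system for $(b_{22},b_{23},b_{33})$ is nonsingular and determines $B$ uniquely as an explicit rational function of $a_{N-3},a_{N-2},a_{N-1},a_N$ and $N$; the denominators that appear are powers of $Q:=a_{N-3}(2N-3)N+(N-1)(N-2)$, which is why $Q$ will govern both feasibility and the stated constraints. This already settles existence: there is exactly one candidate symmetrizer of the prescribed block form, and it remains only to decide when it is SPD.

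The final analytic step is to impose positive definiteness of this explicit $B$ via Sylvester's criterion. The displayed relation for $b_{11}$ shows that $b_{11}>0$ is automatic for $N\ge3$ once $b_{33}>0$, so only two of the three principal minors are genuinely constraining. I expect one of them to be a positive multiple of $Q$, so that $Q>0$ is precisely the first inequality $a_{N-3}>-\tfrac{(N-1)(N-2)}{N(2N-3)}$ in \eqref{eq:constraint-dof4}; note this also guarantees $(N-2)Q^2>0$, which is what makes the second inequality well posed. The remaining minor is $\det B$, whose positivity, after clearing the factor $(N-2)Q^2$ from its denominator, rearranges to $a_{N-1}>g/\big((N-2)Q^2\big)$ with $g$ the stated cubic-in-$a_{N-3}$ polynomial. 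I anticipate that carrying the rational expression for $\det B$ through to the closed form of $g$ in \eqref{eq:constraint-dof4} is the main obstacle, and the one place where a symbolic computation is essentially unavoidable; everything preceding it is bookkeeping on sparse matrices.

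Finally, the equivalence with \eqref{eq:constraint-dof4-equivalent} is a direct change of variables. Substituting $a_j=\tfrac{N+1}{2N+1}\mathcal N_j$ for $j\ne N-1$ and $a_{N-1}=\tfrac{N}{2N+1}+\tfrac{N+1}{2N+1}\mathcal N_{N-1}$ from \eqref{acoef} into the two inequalities of \eqref{eq:constraint-dof4}, and clearing the common positive factors $\tfrac{N+1}{2N+1}$, turns $Q$ into a positive multiple of $\mathcal N_{N-3}(N+1)(2N-3)N+(N-2)(N-1)(2N+1)$ and converts $g$ into $h$ of \eqref{eq:constraint-numerator}; the first inequality then becomes the stated lower bound on $\mathcal N_{N-3}$ and the second the stated lower bound on $\mathcal N_{N-1}$. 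This last step is routine once $g$ is in hand.
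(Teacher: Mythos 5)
Your proposal is correct and rests on the same core strategy as the paper's proof: a conformal block decomposition matched to $A_0=\diag(D,B)$, reduction of the symmetry of $A_0A$ to a small linear system for the entries of $B$, and positive definiteness of $B$ via principal minors. The genuine difference is in which case gets worked out. The paper's appendix proof treats only the special case $a_{N-3}=0$, where it takes $D=\diag(1,3,\dots,2N-3)\in\mathbb{R}^{(N-1)\times(N-1)}$ and a $2\times 2$ block $B$, solves three equations for $b_{11},b_{12},b_{22}$, recovers \eqref{eq:constraint-dof3}, and asserts that the general case and the passage to \eqref{eq:constraint-dof4-equivalent} follow analogously. You instead attack the statement as written, with the $3\times3$ block $B$ and general $a_{N-3}$: your coupling equations $B\bigl(\tfrac{N-2}{2N-3},0,a_{N-3}\bigr)^{T}=(N-2,0,0)^{T}$ are exactly right, as is the elimination $b_{11}=(2N-3)+\tfrac{(2N-3)^2}{(N-2)^2}a_{N-3}^2\,b_{33}$, and your identification of $Q=a_{N-3}(2N-3)N+(N-1)(N-2)$ as the controlling quantity is consistent with \eqref{eq:constraint-dof4} ($Q>0$ is precisely the first inequality, and under the substitution \eqref{acoef} it becomes a positive multiple of the expression appearing in \eqref{eq:constraint-dof4-equivalent}). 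What your route buys is a proof of the theorem actually stated, with a structural explanation of why only two inequalities bind; what the paper's route buys is brevity, at the price of proving only a degenerate case. The one step you leave undischarged --- the symbolic verification that $\det B>0$ rearranges to $a_{N-1}>g/\bigl((N-2)Q^2\bigr)$ with the specific cubic $g$, and that the remaining minors are then automatically positive --- is real work, but it is a finite computation whose shape you have predicted correctly, and it is the same computation the paper itself never performs for $a_{N-3}\ne 0$.
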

 \begin{proof}
 	The proof is given in Appendix \ref{sec:appendix-proof}.
 \end{proof}

\begin{cor}
	If we keep only 3 degrees of freedoms by setting $a_{N-3}=0$, the constraint \eqref{eq:constraint-dof4} reduces to
	\begin{equation}\label{eq:constraint-dof3}
		a_{N-1} > \frac{2N-1}{(N-1)^2} a_{N-2} \brac{N a_{N-2} - (N-1) a_N},
	\end{equation}
	and the equivalent constraint \eqref{eq:constraint-dof4-equivalent} reduces to
	\begin{equation}
	    \mathcal{N}_{N-1} > -\frac{N}{N+1} + \frac{(2N-1)(N+1)}{(N-1)^2(2N+1)} \mathcal{N}_{N-2} (N \mathcal{N}_{N-2} - (N-1) \mathcal{N}_N).
	\end{equation}
	If we further set $a_{N-3}=a_{N-2}=0$ by taking 2 degrees of freedoms, the constraint \eqref{eq:constraint-dof4} reduces to
	\begin{equation}
		a_{N-1} > 0,
	\end{equation}
	and the equivalent constraint \eqref{eq:constraint-dof4-equivalent} reduces to
	\begin{equation}
		\mathcal{N}_{N-1} > -\frac{N}{N+1}.
	\end{equation}	
	In this case, the coefficient matrix $A$ is a tridiagonal matrix with positive off-diagonal entries, which is real diagonalizable.
\end{cor}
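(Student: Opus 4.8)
The corollary is a specialization of Theorem~\ref{thm:hyperbolic}: each reduced constraint is obtained by substituting the vanishing coefficients into \eqref{eq:constraint-dof4} and \eqref{eq:constraint-dof4-equivalent} and simplifying, and the final structural claim follows from the classical fact that a tridiagonal matrix with positive off-diagonal products is real diagonalizable. The plan is therefore to carry out the substitutions, track which polynomial factors cancel, and then exhibit an explicit diagonal symmetrizer in the two-degree-of-freedom limit.

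First I would treat the three-degree-of-freedom case $a_{N-3}=0$. With this choice the first inequality of \eqref{eq:constraint-dof4} reads $0 > -\frac{(N-1)(N-2)}{N(2N-3)}$, whose right-hand side is strictly negative for every $N\ge 3$; the inequality holds automatically and imposes no restriction, so it may be discarded. In the second inequality, every monomial of $g$ that carries a factor $a_{N-3}$ (the $a_{N-3}^3$, $a_{N-3}^2$ and $a_{N-3}$ terms) vanishes, so that $g$ collapses to the single surviving term
\[
a_{N-2}(2N-1)(N-2)^3\left(N a_{N-2} - (N-1) a_N\right),
\]
while the denominator $(N-2)\left(a_{N-3}(2N-3)N+(N-1)(N-2)\right)^2$ reduces to $(N-2)\big((N-1)(N-2)\big)^2=(N-2)^3(N-1)^2$. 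Cancelling the common factor $(N-2)^3$ produces exactly \eqref{eq:constraint-dof3}. The reduction of the equivalent form \eqref{eq:constraint-dof4-equivalent} is entirely parallel: setting $\mathcal{N}_{N-3}=0$ renders its first inequality vacuous, collapses $h$ in \eqref{eq:constraint-numerator} to its lone $\mathcal{N}_{N-2}$-term, and reduces the denominator to $(N-2)^3(N-1)^2(2N+1)^2$; after cancelling $(N-2)^3$ and one factor of $(2N+1)$ one recovers the stated bound on $\mathcal{N}_{N-1}$.

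Next I would pass to the two-degree-of-freedom case by further setting $a_{N-2}=0$. Since the right-hand side of the already-reduced constraint \eqref{eq:constraint-dof3} carries an overall factor $a_{N-2}$, it vanishes and the constraint becomes simply $a_{N-1}>0$; the analogous substitution $\mathcal{N}_{N-2}=0$ leaves $\mathcal{N}_{N-1}>-\frac{N}{N+1}$. For the structural assertion, observe that with $a_{N-3}=a_{N-2}=0$ the last row of $A$ in \eqref{eq:thm-jacobi-matrix} has nonzero entries only in its final two positions, $A_{N,N-1}=a_{N-1}$ and $A_{N,N}=a_N$, so $A$ is tridiagonal. All interior off-diagonal products are of the form $\frac{k+1}{2k+1}\cdot\frac{k+1}{2k+3}>0$, and the terminal product equals $\frac{N}{2N-1}\,a_{N-1}$, which is positive precisely when $a_{N-1}>0$.

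Finally, to conclude real diagonalizability I would invoke the diagonal symmetrization used in Appendix~\ref{sec:appendix-hyperbolic}: the matrix $A_0=\diag\left(1,3,5,\dots,2N-1,\ N/a_{N-1}\right)$ satisfies $d_k A_{k,k+1}=d_{k+1}A_{k+1,k}$ at every step, so $A_0A$ is symmetric, and $A_0$ is SPD exactly under $a_{N-1}>0$; this also shows how the $3\times 3$ block $B$ of Theorem~\ref{thm:hyperbolic} degenerates to a diagonal block in this limit. I do not expect a genuine obstacle here: the entire argument is the bookkeeping of which monomials in $g$ and $h$ survive each substitution together with the clean cancellation of the common factors $(N-2)^3$ and $(2N+1)$. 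The only point meriting care is the transition between the strict inequality in \eqref{eq:constraint-dof4} and the non-strict one in \eqref{eq:constraint-dof4-equivalent}, which should be handled so that the corollary's strict inequalities are justified; in particular the boundary value $a_{N-1}=0$ must be excluded, as it would make $A_0$ only positive semidefinite and the terminal off-diagonal product vanish.
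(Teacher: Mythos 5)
Your proposal is correct and follows essentially the same route the paper intends: the corollary is exactly the specialization of Theorem \ref{thm:hyperbolic} obtained by substituting $a_{N-3}=0$ (and then $a_{N-2}=0$) into \eqref{eq:constraint-dof4} and \eqref{eq:constraint-dof4-equivalent}, tracking the surviving monomials of $g$ and $h$ and cancelling the common factors $(N-2)^3$ and $(2N+1)$, which is precisely what you carry out. Your explicit diagonal symmetrizer $\diag\brac{1,3,\dots,2N-1,N/a_{N-1}}$ for the two-degree-of-freedom case is a correct and slightly more explicit justification of the final tridiagonal claim than the paper's appeal to the classical fact, and it mirrors the paper's own observation for the $P_N$ closure.
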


\begin{rem}
	In principle, one could generalize the result in Theorem \ref{thm:hyperbolic} to more than 4  degrees of freedom, by following the same lines in the proof given in Appendix \ref{sec:appendix-proof}. However, the constraints for  hyperbolicity will be a set of implicit inequalities. How to incorporate the implicit constraint into the architecture of the neural network would be an interesting topic to explore.
\end{rem}

Besides the hyperbolicity for the convection term, one may also be interested in the relaxation effect of the source term. In this regard, the related property is the structural stability condition proposed in \cite{yong1999singular}, which includes the constraints on the convection term, collision term, and the coupling of both. This stability condition is shown to be critical for the existence of the solutions \cite{peng2016uniform}, and satisfied by many moment closure systems \cite{di2017linear}. In addition, the structural stability condition to the moment systems is same as H-theorem to the Boltzmann equation, which characterizes the dissipation property of the moment systems. For the convenience of the reader, we review the structural stability condition in Appendix \ref{sec:appendix-structure-stability}. It is easy to show that our new ML moment closure system also satisfied the structural stability condition:
\begin{thm}\label{thm:structure-stability}
	For $k=3$ and $N\ge 3$, if the constraint \eqref{eq:constraint-dof4} (or equivalently \eqref{eq:constraint-dof4-equivalent}) is satisfied, then the ML moment closure system with the closure relation \eqref{eq:anstaz-partial-learn-gradient} satisfies the structural stability condition in \cite{yong1999singular}.
\end{thm}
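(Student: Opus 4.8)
The plan is to verify, one by one, the conditions (i)--(iii) of the structural stability condition recalled in Appendix \ref{sec:appendix-structure-stability}, for the quasilinear system whose convection matrix is the matrix $A$ in \eqref{eq:thm-jacobi-matrix} and whose source is the linear relaxation term read off from the moment equations. Writing $U=(m_0,\dots,m_N)^{T}$, the source vector is $Q(U)=-\Sigma U$ with the constant diagonal matrix $\Sigma=\diag(\sigma_a,\sigma_s+\sigma_a,\dots,\sigma_s+\sigma_a)$, which is positive semidefinite, so its Jacobian $Q_U=-\Sigma$ is constant in the state. I would begin by identifying the equilibrium manifold $\mathcal E=\{U:Q(U)=0\}$ and the attendant split into conserved and dissipative variables: when $\sigma_a=0$ the variable $m_0$ is conserved and $\mathcal E=\{m_1=\dots=m_N=0\}$, while when $\sigma_a>0$ one has $\mathcal E=\{0\}$. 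In either case the constant selection matrix $P=I$ already puts $Q$ into the conserved/dissipative block form required by condition (i), with the dissipative block of $Q_U$ negative definite.

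Second, the symmetrizability (hyperbolicity) condition (ii) is handed to me for free by Theorem \ref{thm:hyperbolic}: under the constraint \eqref{eq:constraint-dof4} there exists an SPD symmetrizer $A_0=\diag(D,B)$, with $D=\diag(1,3,\dots,2N-5)$ and $B\in\mathbb R^{3\times3}$ SPD, such that $A_0A$ is symmetric. I would simply reuse this same $A_0$ throughout, since the structural stability condition asks for a single symmetrizer that is compatible with both the convection matrix and the source term.

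The only remaining work is condition (iii), the dissipativeness of the source measured in the $A_0$-metric, namely that $A_0 Q_U=-A_0\Sigma$ is symmetric and negative semidefinite with kernel equal to the tangent space of $\mathcal E$. Here I would exploit the block-diagonal form of $A_0$ together with the structure of $\Sigma$. On the leading $(N-2)\times(N-2)$ block, $A_0 Q_U$ is the product $-D\,\Sigma_D$ of two diagonal matrices, where $\Sigma_D$ is the restriction of $\Sigma$ to the indices $0,\dots,N-3$; this product is diagonal, hence symmetric, with nonpositive entries. On the trailing $3\times3$ block the indices $N-2,N-1,N$ are all $\ge1$, so $\Sigma$ restricted there equals the scalar multiple $(\sigma_s+\sigma_a)I_3$; consequently that block of $A_0Q_U$ is $-(\sigma_s+\sigma_a)B$, which is symmetric because $B$ is, and negative semidefinite because $B$ is SPD and $\sigma_s+\sigma_a\ge0$. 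Assembling the two blocks shows that $A_0Q_U$ is symmetric and negative semidefinite, and I would then read off its kernel and check that it coincides with the tangent space $T\mathcal E$ (the span of $e_0$ when $\sigma_a=0$, and $\{0\}$ when $\sigma_a>0$), completing condition (iii).

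I expect the crux to be precisely the symmetry of $A_0Q_U$ on the trailing block, and this is exactly where the block-diagonal choice of symmetrizer pays off: for a generic full SPD symmetrizer the product $A_0\Sigma$ need not be symmetric, whereas the block structure---combined with the fact that the relaxation coefficient $\sigma_s+\sigma_a$ is constant across the last three indices, so that $\Sigma$ acts as a scalar on the $B$-block and commutes with $B$---forces symmetry. Everything else is routine because the source is linear with constant Jacobian, so no equilibrium-dependent or higher-order expansion is needed and the three conditions can be checked globally rather than only near $\mathcal E$.
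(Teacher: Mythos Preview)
Your approach is correct and essentially the same as the paper's: both exploit the block-diagonal form $A_0=\diag(D,B)$ together with the fact that the relaxation matrix $\Sigma$ acts as a scalar on the trailing $3\times3$ block, so that $A_0Q_U$ is automatically symmetric negative semidefinite, after which the structural stability conditions follow with $P$ a scalar multiple of the identity. Your write-up is in fact more explicit than the paper's (you treat both $\sigma_a=0$ and $\sigma_a>0$ and spell out the kernel/equilibrium correspondence), but the key observation is identical.
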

\begin{proof}
    We only prove for the case $\sigma_a=0$. The other case can be proven by following the same line.
    
    Denote $Q=Q(U)$ the source term of the closure system and $Q_U=\frac{\partial Q}{\partial U}$. Then we can compute
    \begin{equation}
    \begin{aligned}
        A_0 Q_U &= \diag(D, B)\diag(0, -\sigma_s I_N) \\
        &= \diag(D, B)\diag(-C, -\sigma_s  I_3) \\
        &= - \diag(D C, \sigma_s B)
    \end{aligned}
    \end{equation}
    Here, $C=\diag(0, \sigma_s I_{N-3} )$ and $I_r$ denote the identity matrix of order $r$.
    Notice that $A_0 Q_U$ is a symmetric negative semi-definite matrix. Then the structural stability condition can be easily checked for this moment closure system by taking $P$ to be a scalar matrix in Appendix \ref{sec:appendix-structure-stability}.
\end{proof}

\subsection{Diffusion limit}\label{sec:diffusion-limit}

In this part, we show formally that the hyperbolic ML moment closure model has the correct diffusion limit.  
Consider the RTE under a diffusive scaling:
\begin{equation}\label{eq:rte-diffusive-scaling}
	\varepsilon \partial_t f + v \partial_x f = \frac{\sigma_s}{\varepsilon}\brac{\frac{1}{2} \int_{-1}^1 f dv - f} - \varepsilon \sigma_a f,
\end{equation}
where $\varepsilon>0$ denotes the Knudsen number, which is the ratio of mean free path of particles to the characteristic length. It is well known that as $\varepsilon\rightarrow 0$, the kinetic transport equation \eqref{eq:rte-diffusive-scaling} will converge to the diffusion limit---macroscopic model:
\begin{equation}\label{eq:rte-diffusive-limit}
    \partial m_0 = \partial_x\brac{ \frac{1}{3\sigma_s} \partial_x m_0} - \sigma_a m_0,
\end{equation}
where $m_0$ is the zeroth-order moment of $f$.

For the RTE under a diffusive scaling \eqref{eq:rte-diffusive-scaling}, the corresponding moment closure equation is:
\begin{equation}\label{eq:moment-equation-diffusive}
\begin{aligned}
	 \varepsilon \partial_t m_0 + \partial_x m_1 &= - \varepsilon \sigma_a m_0 \\
     \varepsilon \partial_t m_{1} + \frac{1}{3} \partial_x m_{0} + \frac{2}{3} \partial_x m_{2}  &= - ( \frac{\sigma_s}{\varepsilon} + \varepsilon \sigma_a) m_{1}, \\
     &\cdots \\
	 \varepsilon \partial_t m_{N-1} + \frac{N-1}{2N-1} \partial_x m_{N-2} + \frac{N}{2N-1} \partial_x m_{N}  &= - ( \frac{\sigma_s}{\varepsilon} + \varepsilon \sigma_a) m_{N-1}, \\
	 \varepsilon \partial_t m_N + \frac{N}{2N+1} \partial_x m_{N-1} + \frac{N+1}{2N+1} \brac{\sum_{k=0}^N \mathcal{N}_k \partial_x m_k} &= - ( \frac{\sigma_s}{\varepsilon} + \varepsilon \sigma_a) m_N.
\end{aligned}
\end{equation}

Next, we formally show that the ML moment closure model \eqref{eq:moment-equation-diffusive} with $N\ge3$ converges to the diffusion limit \eqref{eq:rte-diffusive-limit} as $\varepsilon\rightarrow0$. We expand $m_k$ for $k=0,\cdots,2$ as:
\begin{equation}
    m_k \sim m_k^{(0)} + \varepsilon m_k^{(1)}  + \varepsilon^2 m_k^{(2)} + \mathcal{O}(\varepsilon^3)
\end{equation}
and plug into the first three equations of \eqref{eq:moment-equation-diffusive}. Collecting the $\mathcal{O}({\varepsilon})$ term of the first equation, we obtain
\begin{equation}
    \partial_t m_0^{(0)} + \partial_x m_1^{(1)} = - \sigma_a m_0^{(0)}.
\end{equation}
Collecting the $\mathcal{O}(1)$ term of the second equation, we have
\begin{equation}
    \frac{1}{3}\partial_x m_0^{(0)} + \frac{2}{3}\partial_x m_2^{(0)} = - \sigma_s m_1^{(1)}.
\end{equation}
Collecting the $\mathcal{O}({\varepsilon}^{-1})$ term of the third equation, we get
\begin{equation}
    m_2^{(0)} = 0.
\end{equation}
Combining the above three relations, we eventually reach:
\begin{equation}
    \partial m_0^{(0)} = \partial_x\brac{ \frac{1}{3\sigma_s} \partial_x m_0^{(0)}} - \sigma_a m_0^{(0)}.
\end{equation}
This indicates that the leading-order term of $m_0$ satisfies the diffusion equation \eqref{eq:rte-diffusive-limit}.

To justify the above formal asymptotic analysis, it turns out that the structural stability condition is essential.
Indeed, under the structural stability condition, one can rigorously prove that the solution of the ML moment closure model will converge to that of the macroscopic model \eqref{eq:rte-diffusive-limit} as $\varepsilon$ goes to 0 \cite{peng2016uniform,lattanzio2001hyperbolic}, if the constraint \eqref{eq:constraint-dof4} (or equivalently \eqref{eq:constraint-dof4-equivalent}) is satisfied and the neural network in \eqref{eq:moment-equation-diffusive} satisfies some regularity conditions. Although the regularity of the neural network seems not easy to be validated, we will show numerically in Section \ref{sec:numerical-test} that our hyperbolic ML closure model can capture the correct diffusion limit.

\section{Training of the neural network}\label{sec:training}

In this section, we provide details of the training of the proposed neural network
that enforces the constraint \eqref{eq:constraint-dof4} (or equivalently \eqref{eq:constraint-dof4-equivalent}) for the hyperbolicity for the case when $k=3$. We design a fully-connected neural network denoted by $\mathcal{M}:\mathbb{R}^{N+1}\rightarrow\mathbb{R}^4$ with the input being the lower order moments $(m_0, m_1, \cdots,m_{N})$ and the output being $\mathcal{M}=(\mathcal{M}_1, \mathcal{M}_2, \mathcal{M}_3, \mathcal{M}_4)$. Then, we do the following post-processing to the output of the neural network:
\begin{equation*}
\begin{aligned}
	& \mathcal{N}_{N} = \mathcal{M}_{4}, \quad \mathcal{N}_{N-2} = \mathcal{M}_{2}, \\
	& \mathcal{N}_{N-3} = \sigma(\mathcal{M}_{3}) - \frac{(N-2) (N-1) (2 N+1)}{N(2N-3)(N+1)} \\
	& \mathcal{N}_{N-1} = \sigma(\mathcal{M}_{1}) - \frac{N}{N+1} + \frac{h(\mathcal{N}_{N-3},\mathcal{N}_{N-2},\mathcal{N}_{N};N)}{(N-2) (\mathcal{N}_{N-3} (N+1) (2 N-3) N + (N-2) (N-1) (2 N+1))^2}
\end{aligned}
\end{equation*}
where $h$ is the function defined in \eqref{eq:constraint-numerator}.
Here $\sigma:\mathbb{R}\rightarrow\mathbb{R}$ is a positive function, i.e., $\sigma(x)>0$ for any $x\in\mathbb{R}$. We test several positive functions to enforce this constraint, including exponential function, ReLU (Rectified Linear Unit) function, softplus function and square function. Our numerical tests show that the neural network with $\sigma$ to be the softplus function has the smallest $L^2$ error in the training data.

In the numerical implementation, we take the number of layers to be 6 and the number of nodes to be 256 in the fully-connected neural network and use the hyperbolic tangent activation function. Other hyperparameters in the training are the same as those in \cite{huang2021gradient}.
The training data comes from numerically solving the RTE using the space-time discontinuous Galerkin (DG) method \cite{crockatt2017arbitrary,crockatt2019hybrid} with a range of initial conditions in the form of truncated Fourier series and different scattering and absorption coefficients which are constants over the computational domain. We train the neural network with 100 different initial data. For each initial data, we run the numerical solver up to $t=1$. The other parameters are the same as in \cite{huang2021gradient}.

\section{Numerical results}\label{sec:numerical-test}

In this section, we show the performance of our ML closure model on a variety of benchmark tests, including problems with constant scattering and absorption coefficients, Gaussian source problems and two-material problems. The main focus of the tests is on the comparison of hyperbolic ML closure (termed as ``hyperbolic closure'') with $k=3$, ML closure with learning gradient in \cite{huang2021gradient} (termed as ``non-hyperbolic closure'') with $k=N$, and the classical $P_N$ closure \cite{chandrasekhar1944radiative}.

In all  numerical examples, we take the physical domain to be the unit interval $[0,1]$ and periodic boundary conditions are imposed.
To numerically solve the moment closure system, we apply the fifth-order finite difference WENO scheme \cite{jiang1996efficient} with a Lax–Friedrichs flux splitting for the spatial discretization, and the third order strong-stability-preserving Runge-Kutta (RK) scheme \cite{shu1988efficient} for the time discretization. We take the grid number in space to be $N_x = 256$. For the hyperbolic ML closure model, the CFL condition is taken to be $\Delta t = 0.8\Delta x/c$ where $c$ denote the maximum eigenvalues in all the grid points, and the penalty constant in the Lax-Friedrichs numerical flux $\alpha_{\textrm{LF}}=c$. For the non-hyperbolic ML closure model, we impose larger numerical diffusion by taking the penalty constant in the Lax-Friedrichs numerical flux $\alpha_{\textrm{LF}}=5$ and fixing $\Delta t = 0.1\Delta x$ \cite{huang2021gradient}.

\begin{exam}[constant scattering and absorption coefficients]\label{ex:const}
	The setup of this example is the same as the data preparation. The scattering and absorption coefficients are taken to be constants over the domain. The initial condition is taken to be a truncated Fourier series. 

	In Figure \ref{fig:const-profile-compare-N6}, we show the numerical solutions of $m_0$ and $m_1$ with seven moments in the closure system ($N=6$). It is observed that, at $t=0.5$, the two ML moment closures agree well with the RTE, except that some minor oscillations appear in the non-hyperbolic ML closure. At $t=1$, the solutions of non-hyperbolic ML moment closure blow up, while the hyperbolic one stays stable and has good agreement with the RTE. As a comparison, the $P_N$ closure has large deviations from the exact solution at both $t=0.5$ and $t=1$.
	\begin{figure}
	    \centering
	    \subfigure[$m_0$ at $t=0.5$]{
	    \begin{minipage}[b]{0.46\textwidth}
	    \includegraphics[width=1\textwidth]{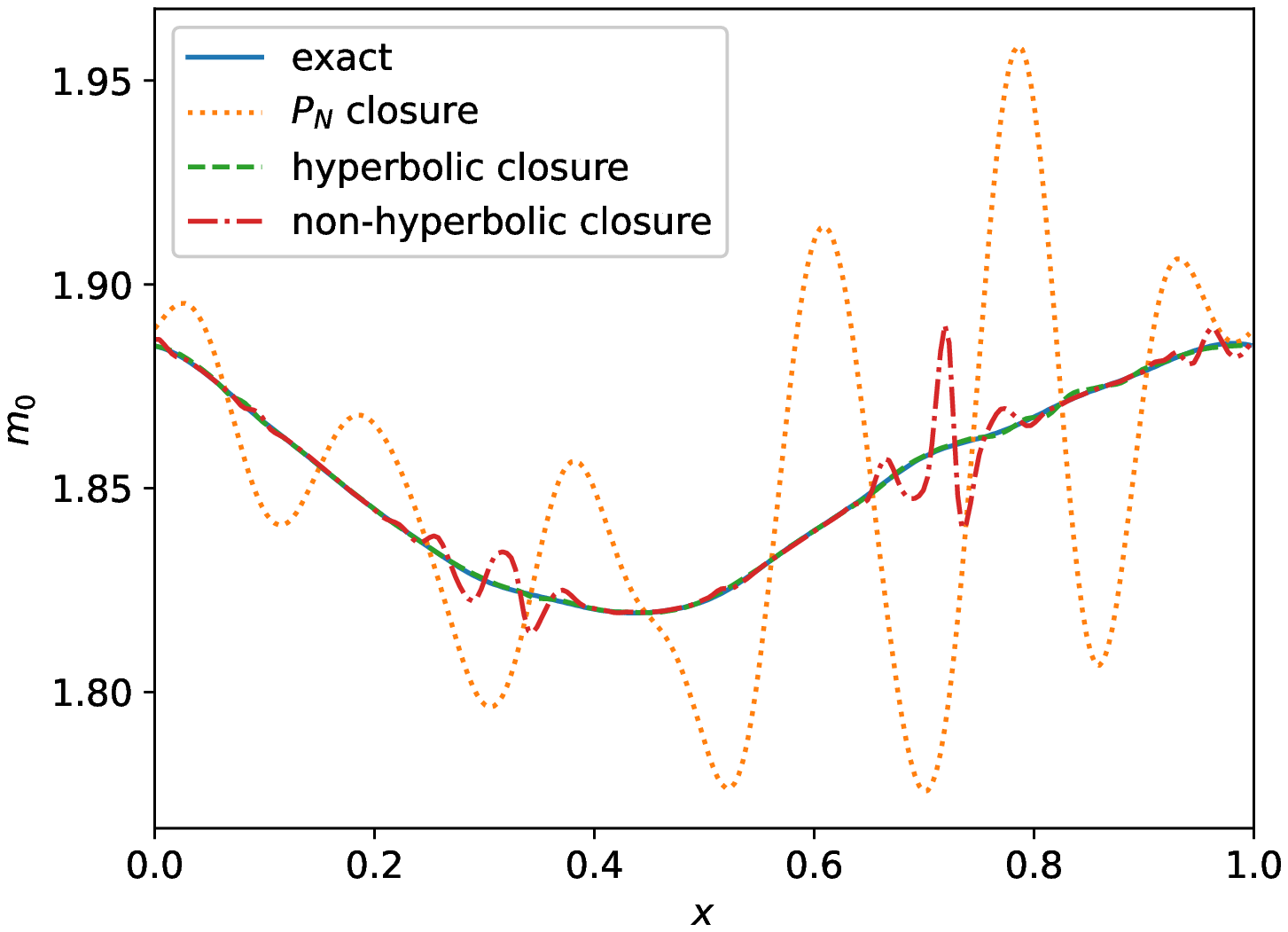}
	    \end{minipage}
	    }
	    \subfigure[$m_1$ at $t=0.5$]{
	    \begin{minipage}[b]{0.46\textwidth}    
	    \includegraphics[width=1\textwidth]{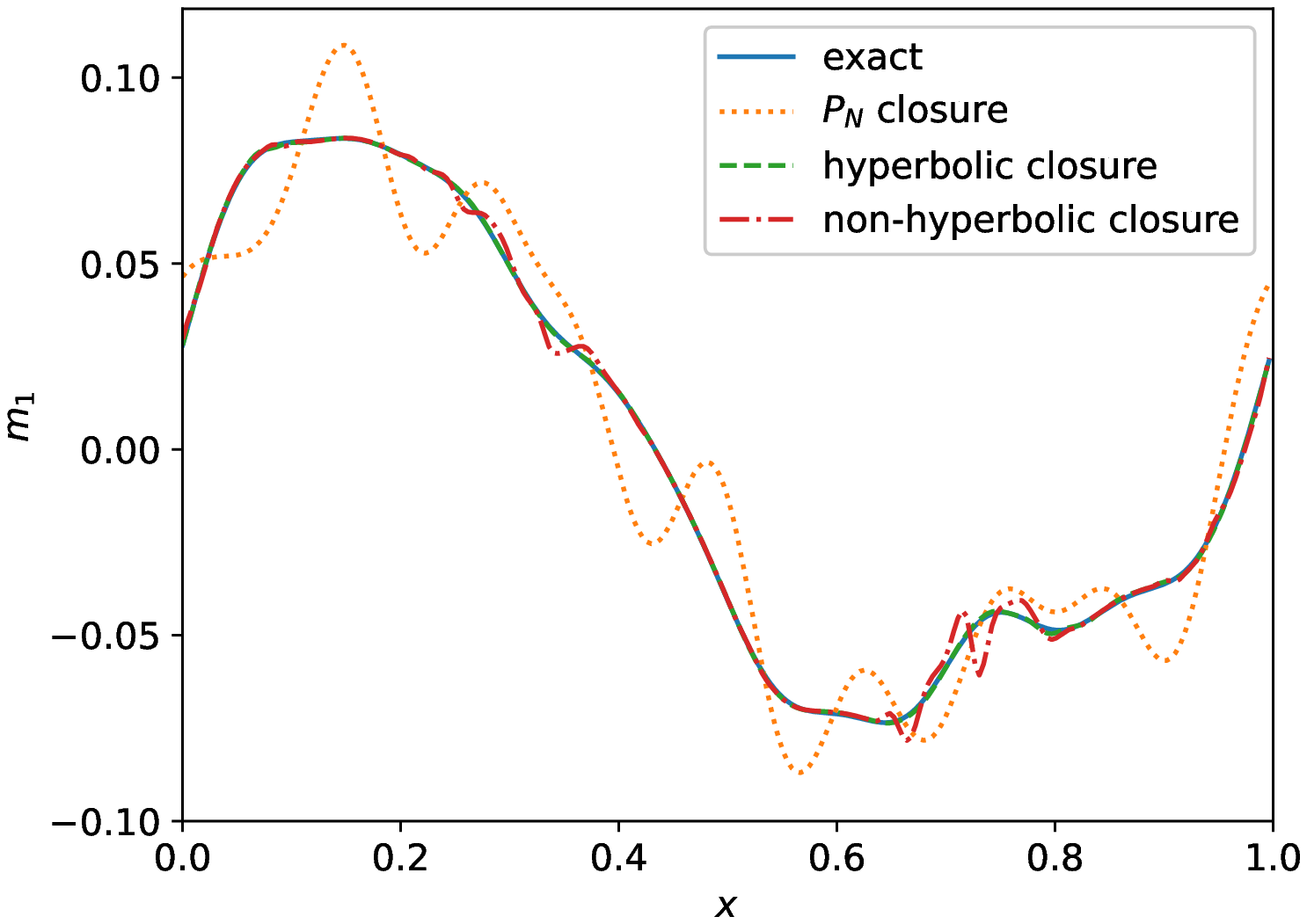}
	    \end{minipage}
	    }
	    \bigskip
	    \subfigure[$m_0$ at $t=1$]{
	    \begin{minipage}[b]{0.46\textwidth}
	    \includegraphics[width=1\textwidth]{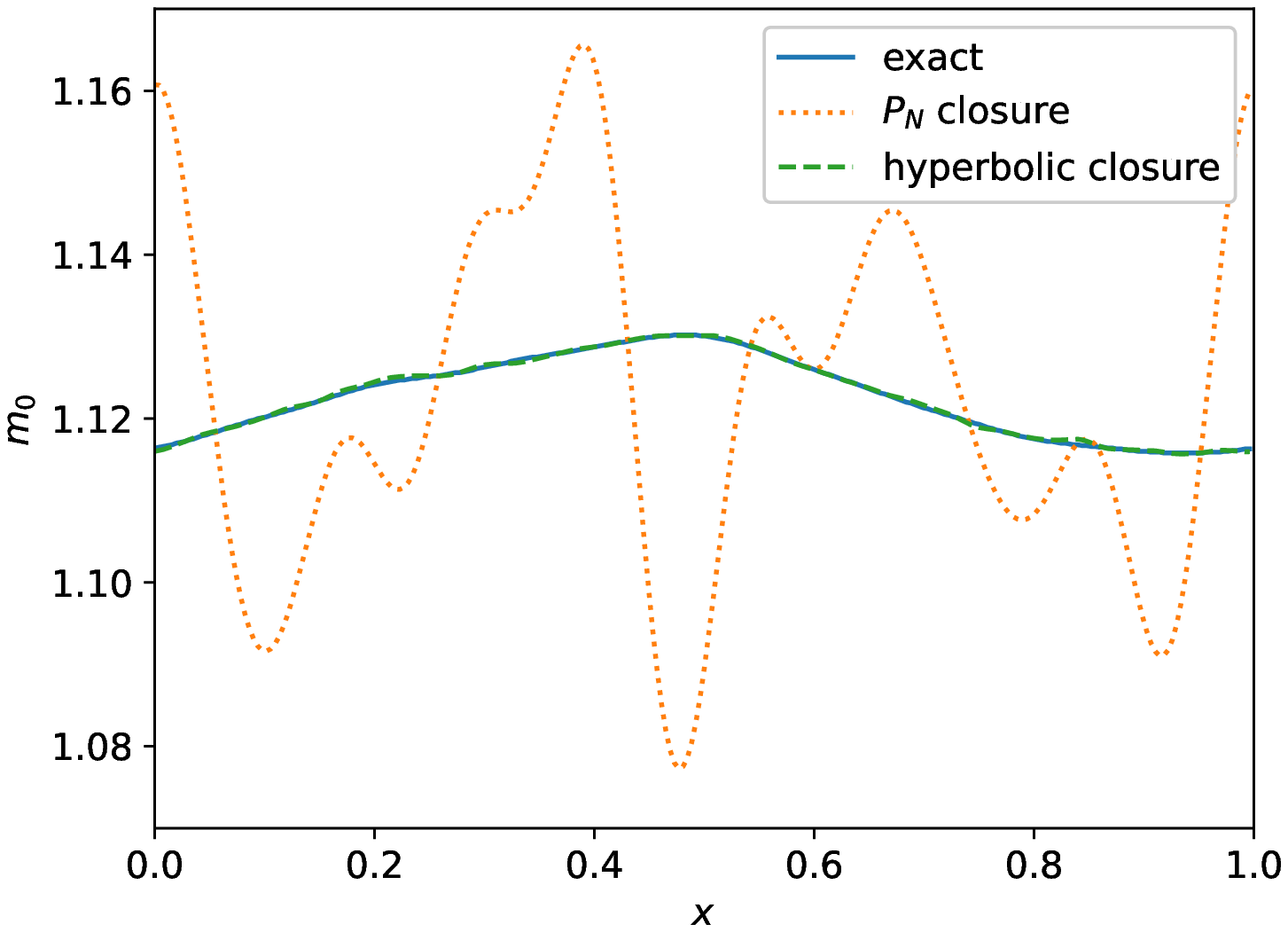}
	    \end{minipage}
	    }
	    \subfigure[$m_1$ at $t=1$]{
	    \begin{minipage}[b]{0.46\textwidth}    
	    \includegraphics[width=1\textwidth]{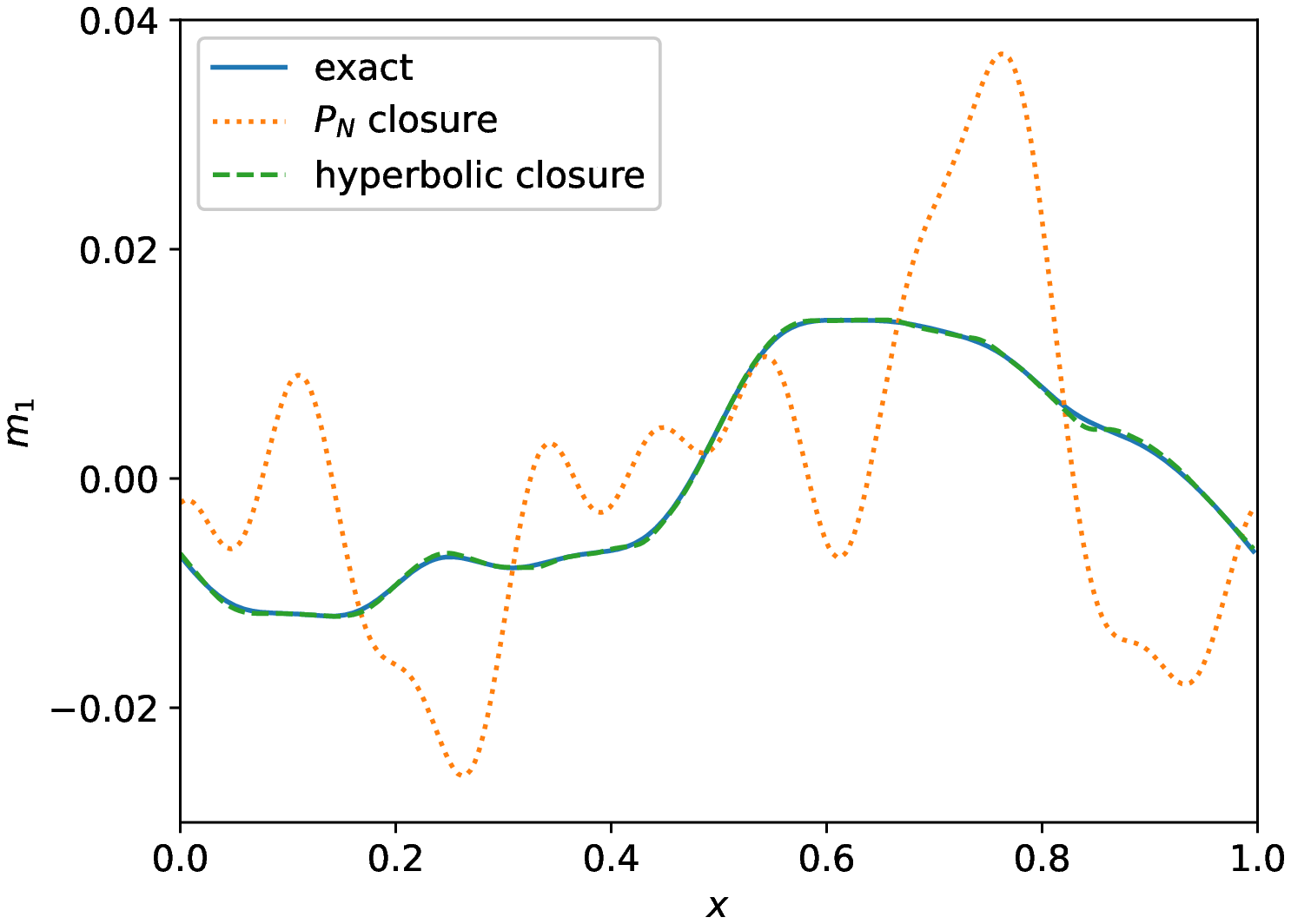}
	    \end{minipage}
	    }    \caption{Example \ref{ex:const}: constant scattering and absorption coefficients, $\sigma_s=\sigma_a=1$, $N=6$, $t=0.5$ and $t=1$. The numerical solution of the non-hyperbolic ML closure blows up at $t=1$.}
	    \label{fig:const-profile-compare-N6}
	\end{figure}

	In Figure \ref{fig:const-loglog-N6}, we display the log–log scatter plots of the relative $L^2$ error versus the scattering coefficient for $N = 6$. We observe that, at $t=0.5$, both ML closures have better accuracy than the $P_N$ closure. Moreover, in the optically thick regime, all the closures perform well. At $t=1$, the numerical solutions of the non-hyperbolic ML moment closure blow up at some data points, while the hyperbolic one still has good accuracy.
	\begin{figure}
	    \centering
	    \subfigure[$m_0$ at $t=0.5$]{
	    \begin{minipage}[b]{0.46\textwidth}
	    \includegraphics[width=1\textwidth]{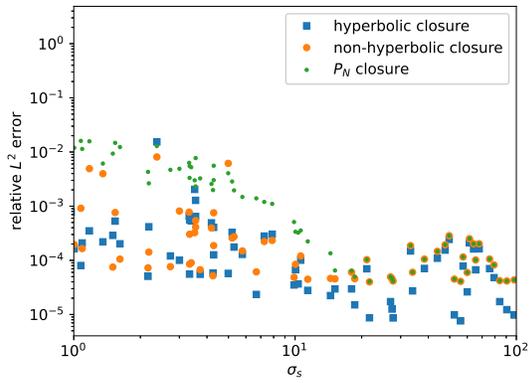}
	    \end{minipage}
	    }
	    \subfigure[$m_1$ at $t=0.5$]{
	    \begin{minipage}[b]{0.46\textwidth}    
	    \includegraphics[width=1\textwidth]{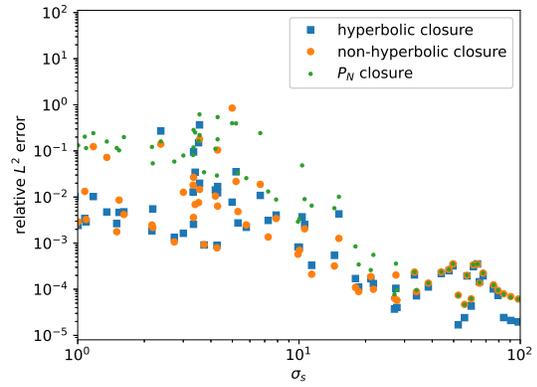}
	    \end{minipage}
	    }
	    \bigskip
	    \subfigure[$m_0$ at $t=1$]{
	    \begin{minipage}[b]{0.46\textwidth}
	    \includegraphics[width=1\textwidth]{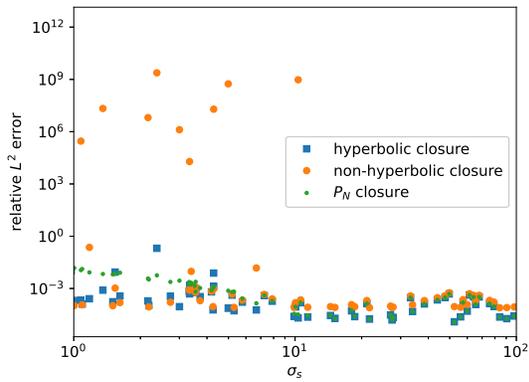}
	    \end{minipage}
	    }
	    \subfigure[$m_1$ at $t=1$]{
	    \begin{minipage}[b]{0.46\textwidth}    
	    \includegraphics[width=1\textwidth]{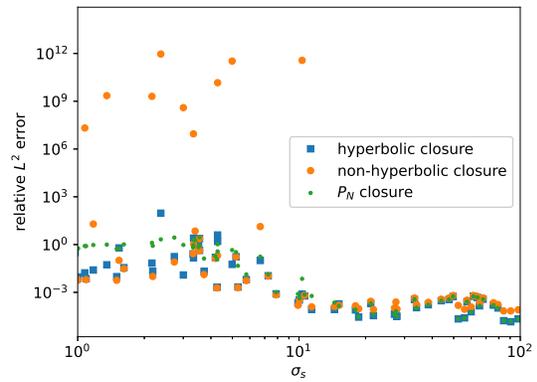}
	    \end{minipage}
	    }    \caption{Example \ref{ex:const}: constant scattering and absorption coefficients, $\sigma_s=\sigma_a=1$, $N=6$, $t=0.5$ and $t=1$. The non-hyperbolic closure blows up at $t=1$.}
	    \label{fig:const-loglog-N6}
	\end{figure}

	In Figure \ref{fig:const-evolution} (a), we present the $L^2$ error between the solutions of the hyperbolic ML moment closure system and the solution generated by the RTE in the optically thin regime ($\sigma_s=\sigma_a=1$). We observe that the hyperbolic closure generates good predictions in the long time simulation up to $t=10$. We also display the maximum eigenvalues in all the grid points during the time evolution in Figure \ref{fig:const-evolution} (b). It is observed that the eigenvalues are always real numbers, which validates the hyperbolicity feature of the closure system. However, the eigenvalues could be as large as four during the time evolution, which violates the physical characteristic speed of 1 in the RTE. This also results in smaller time step size in the simulation of the closure system. It is unclear how to preserve the physical characteristic speed in the moment closure system in this setting. 
	\begin{figure}
	    \centering
	    \subfigure[$L^2$ errors of $m_0$ and $m_1$]{
	    \begin{minipage}[b]{0.46\textwidth}
	    \includegraphics[width=1\textwidth]{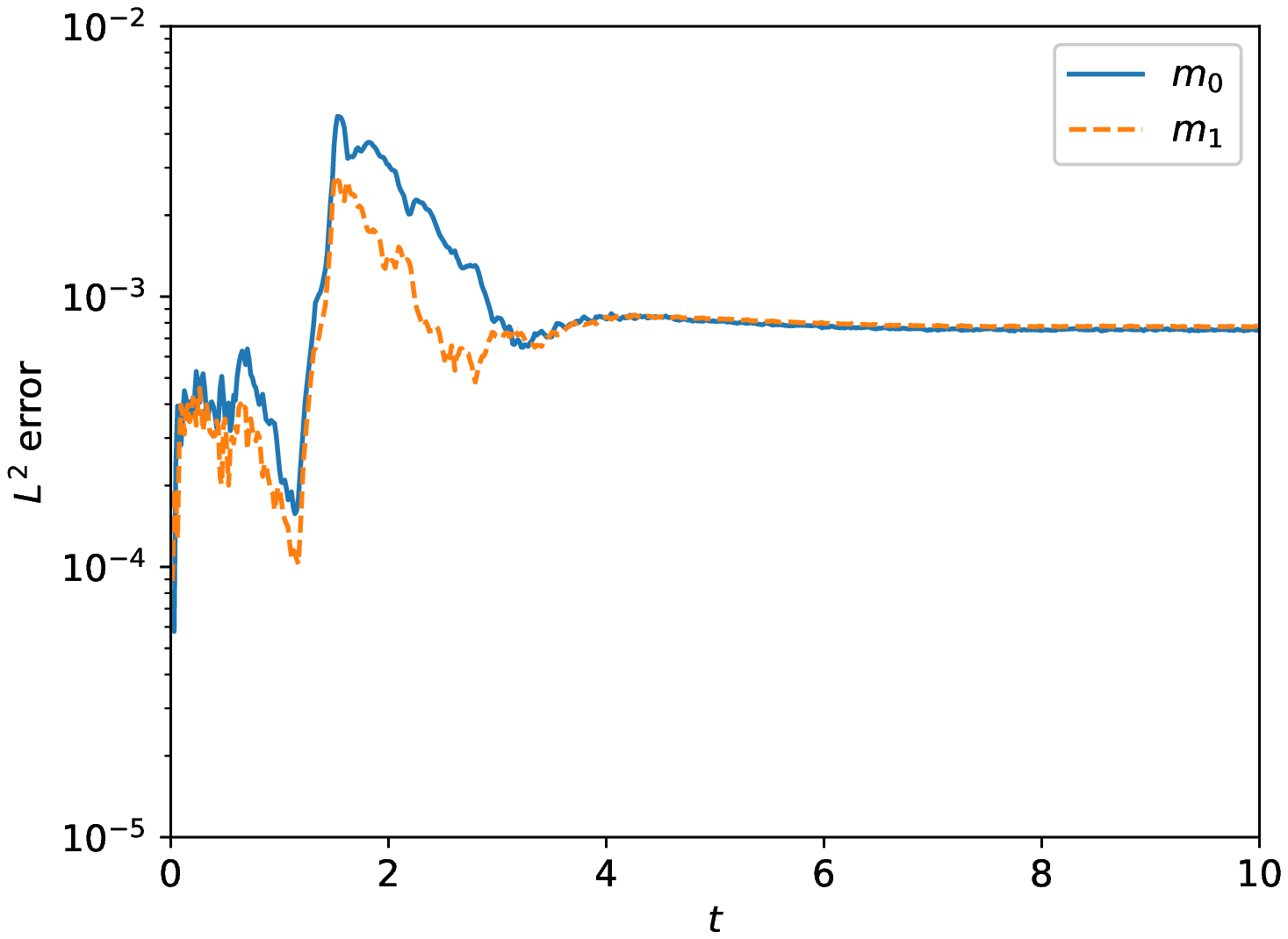}
	    \end{minipage}
	    }
	    \subfigure[maximum eigenvalues]{
	    \begin{minipage}[b]{0.46\textwidth}    
	    \includegraphics[width=1\textwidth]{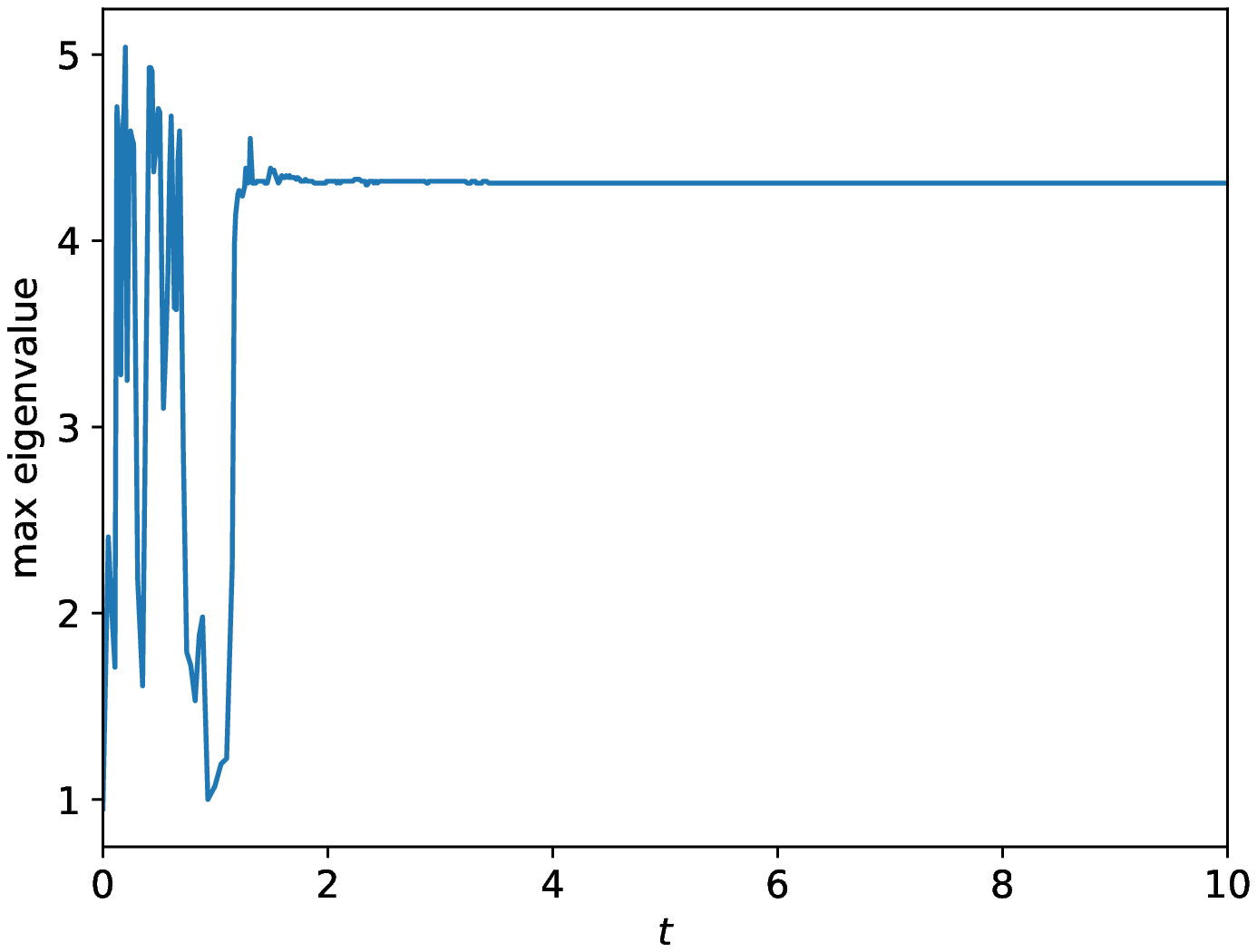}
	    \end{minipage}
	    }
	    \caption{Example \ref{ex:const}: constant scattering and absorption coefficients, $\sigma_s=\sigma_a=1$, $N=6$. Results obtained by the  hyperbolic closure.}
	    \label{fig:const-evolution}
	\end{figure}
\end{exam}

\begin{exam}[Gaussian source problem]\label{ex:gauss-source}
	In this example, we investigate the RTE with the initial condition to be Gaussian distribution in the physical domain, named Gaussian source problem in literature \cite{frank2012perturbed,fan2020nonlinear}:
	\begin{equation}\label{eq:gauss-source-init}
		f_0(x,v) = \frac{c_1}{(2 \pi \theta)^{1/2}} \exp\brac{-\frac{(x - x_0)^2}{2 \theta}} + c_2.
	\end{equation}
	In this test, we take $c_1=0.5$, $c_2=2.5$, $x_0=0.5$ and $\theta=0.01$. 

	In Figure \ref{fig:gauss-source-compare}, we present the results obtained using various closure models. We observe good agreement between the two ML closure model and the kinetic model at $t=1$, while the $P_N$ model has large deviations from the kinetic model. Moreover, the non-hyperbolic closure is more accurate than the hyperbolic closure, perhaps due to the fact that the non-hyperbolic closure has larger $k$ values and thus can approximate the closure better. The solution of the non-hyperbolic ML closure model blows up at $t=2$, while the hyperbolic ML model stays stable and is more accurate than the $P_N$ closure. Although the deviation of the hyperbolic ML model ``looks'' large in Figure \ref{fig:gauss-source-compare} (c), this is due to the fact that the exact solution is close to constant over the computational domain and the true error is actually small. The relative $L^2$ and the relative $L^{\infty}$ error between the solution of the hyperbolic ML model and that of the exact solution at $t=2$ is $1.40\times10^{-3}$ and $3.70\times10^{-3}$, respectively. This again illustrates that the hyperbolicity is an essential property in the ML closure model. 
	\begin{figure}
	    \centering
	    \subfigure[$m_0$ at $t=1$]{
	    \begin{minipage}[b]{0.46\textwidth}
	    \includegraphics[width=1\textwidth]{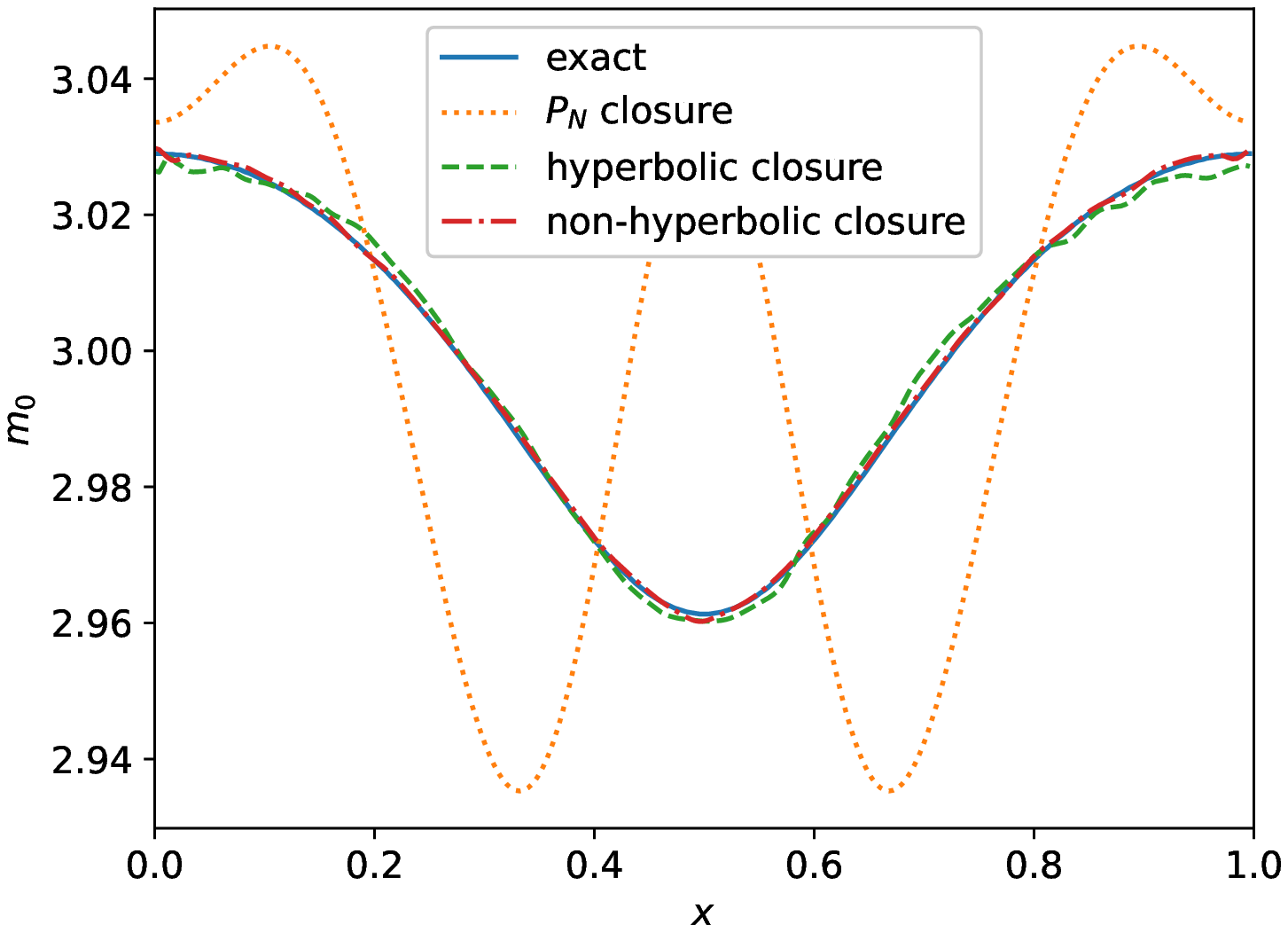}
	    \end{minipage}
	    }
	    \subfigure[$m_1$ at $t=1$]{
	    \begin{minipage}[b]{0.46\textwidth}    
	    \includegraphics[width=1\textwidth]{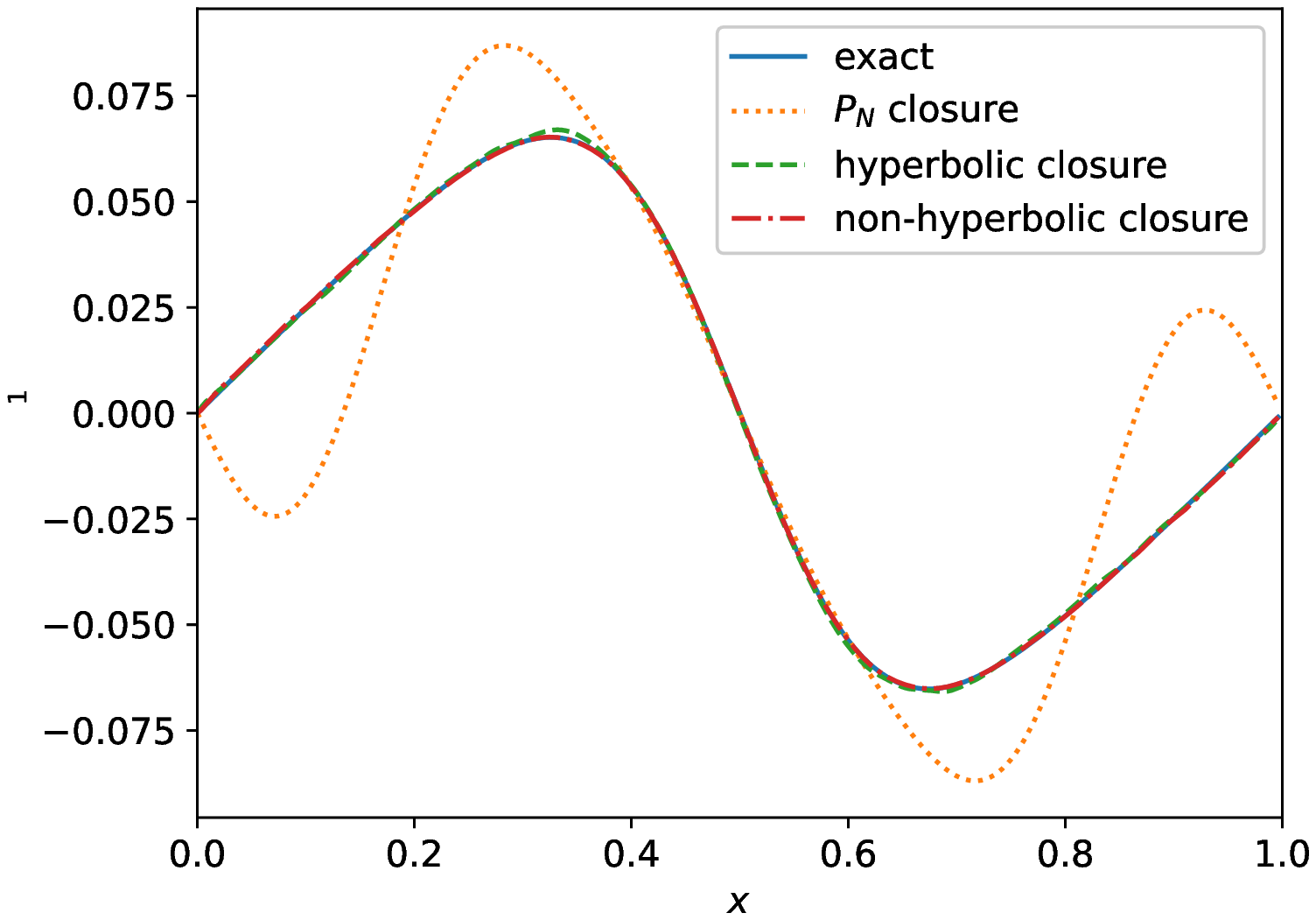}
	    \end{minipage}
	    }
	    \bigskip
	    \subfigure[$m_0$ at $t=2$]{
	    \begin{minipage}[b]{0.46\textwidth}
	    \includegraphics[width=1\textwidth]{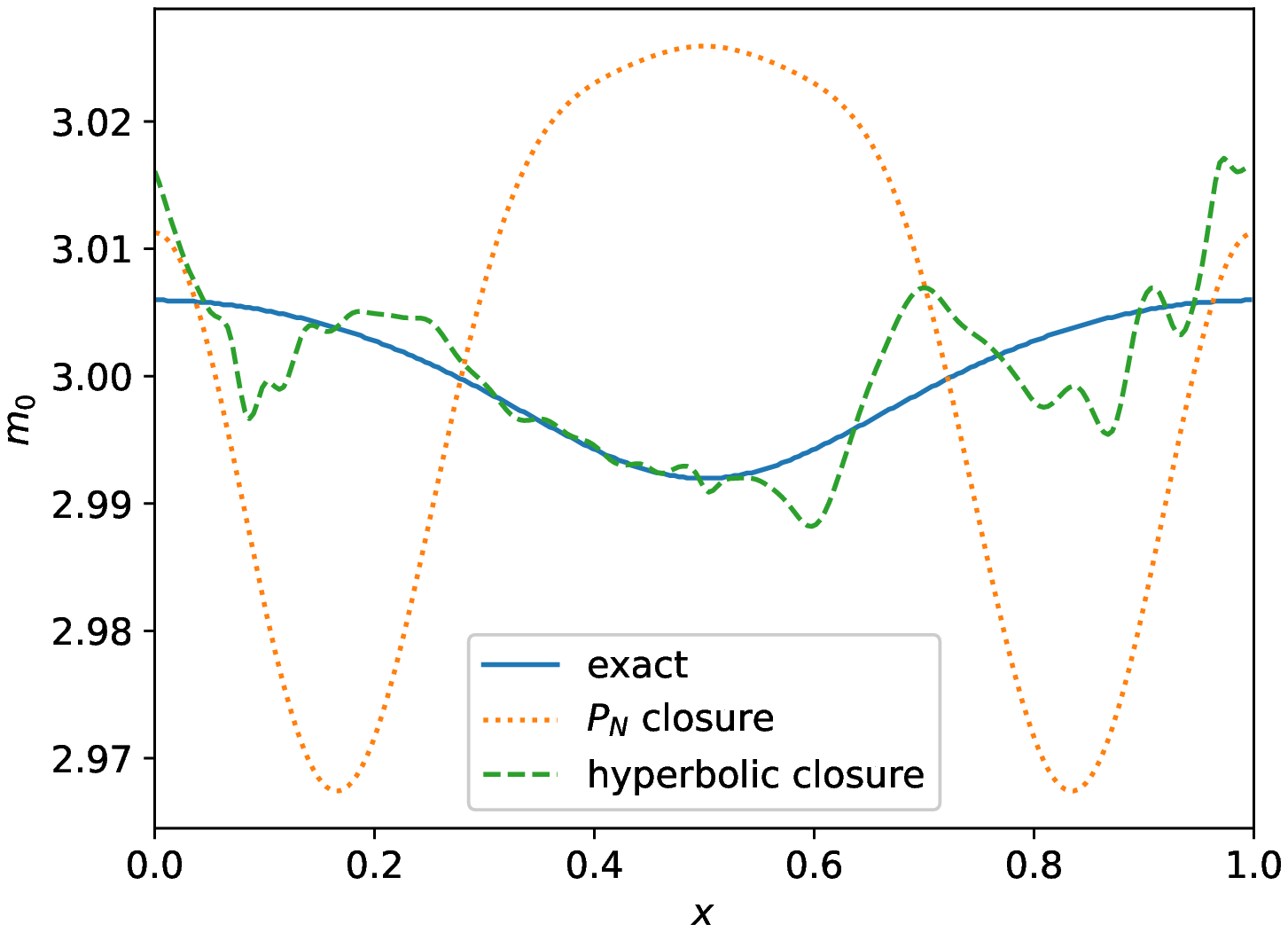}
	    \end{minipage}
	    }
	    \subfigure[$m_1$ at $t=2$]{
	    \begin{minipage}[b]{0.46\textwidth}    
	    \includegraphics[width=1\textwidth]{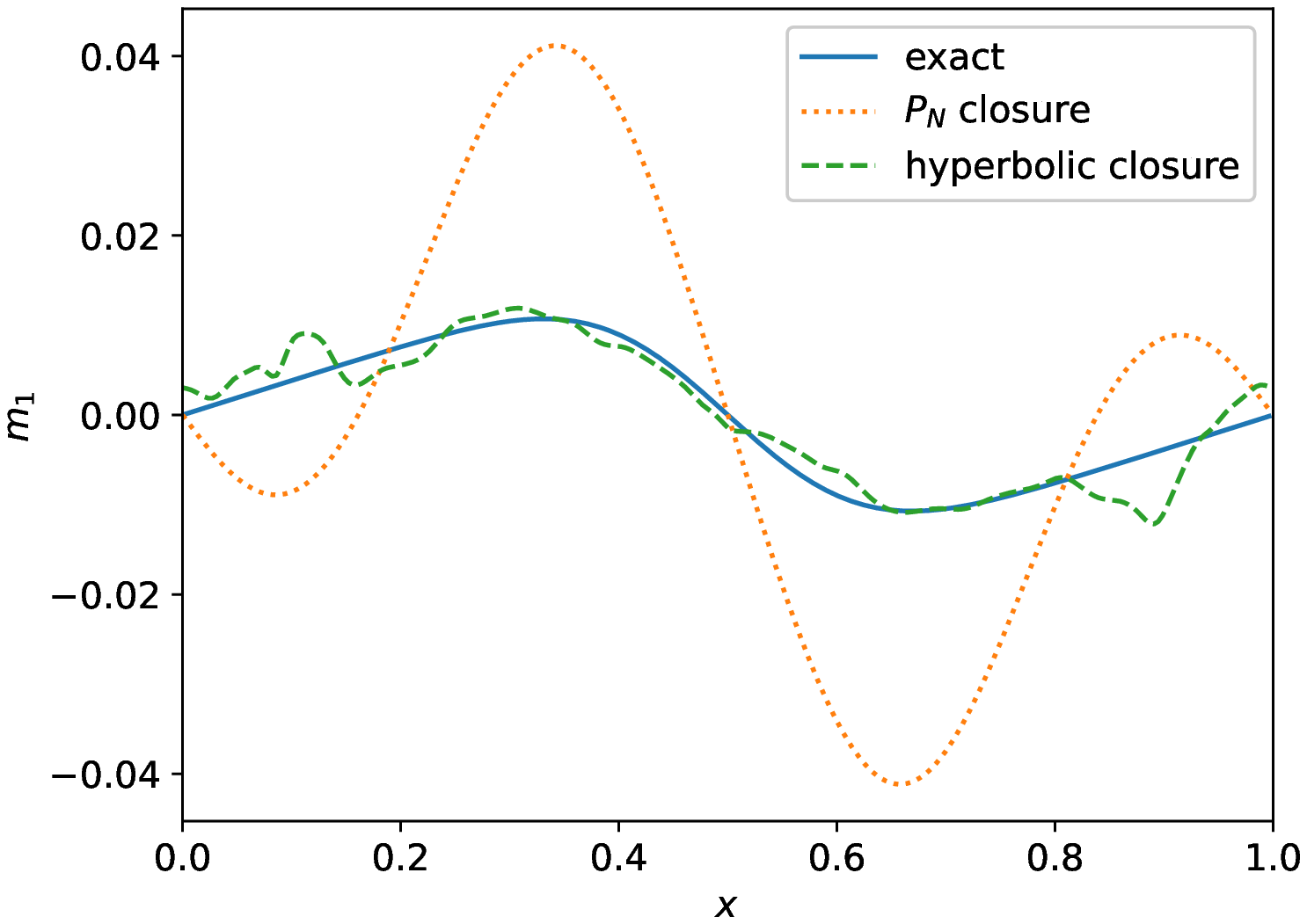}
	    \end{minipage}
	    }    \caption{Example \ref{ex:gauss-source}: Gaussian source problem, $\sigma_s=\sigma_a=1$, $N=6$, $t=1$ and $t=2$. The non-hyperbolic closure blows up at $t=2$.}
	    \label{fig:gauss-source-compare}
	\end{figure}
\end{exam}

\begin{exam}[two-material problem]\label{ex:two-material}
	In this example, we consider the two-material problem \cite{larsen1989asymptotic}. In the problem setup, there exist two discontinuities $0<x_1<x_2<1$ in the domain, and $\sigma_s$ and $\sigma_a$ are piecewise constant functions:
	$$ 
	\sigma_s(x)=\left\{
	\begin{aligned}
	& \sigma_{s1}, \quad  ~ x_1 < x < x_2, \\
	& \sigma_{s2}, \quad  ~ 0\le x < x_1 ~ \textrm{or} ~ x_2\le x < 1.
	\end{aligned}
	\right.
	$$
	and
	$$ 
	\sigma_a(x)=\left\{
	\begin{aligned}
	& \sigma_{a1}, \quad  ~ x_1 < x < x_2, \\
	& \sigma_{a2}, \quad  ~ 0\le x < x_1 ~ \textrm{or} ~ x_2\le x < 1.
	\end{aligned}
	\right.
	$$

	In the numerical example, we take $x_1=0.3$, $x_2=0.7$, $\sigma_{s1}=1$, $\sigma_{s2}=10$ and $\sigma_{a1}=\sigma_{a2}=0$. The numerical results are shown in Figure \ref{fig:two-material-compare}. The gray part is in the optically thin regime and the other part is in the intermediate regime. We observe that the two ML closure models are more accurate than the $P_N$ closure at $t=0.5$ and the hyperbolic ML closure is slightly better than the non-hyperbolic one. At $t=1$, the hyperbolic ML closure still agrees well with the kinetic model. Large deviations exist for the $P_N$ closure in the optically thin regime and severe oscillations occur for the non-hyperbolic ML closure.
	\begin{figure}
	    \centering
	    \subfigure[$m_0$ at $t=0.5$]{
	    \begin{minipage}[b]{0.46\textwidth}
	    \includegraphics[width=1\textwidth]{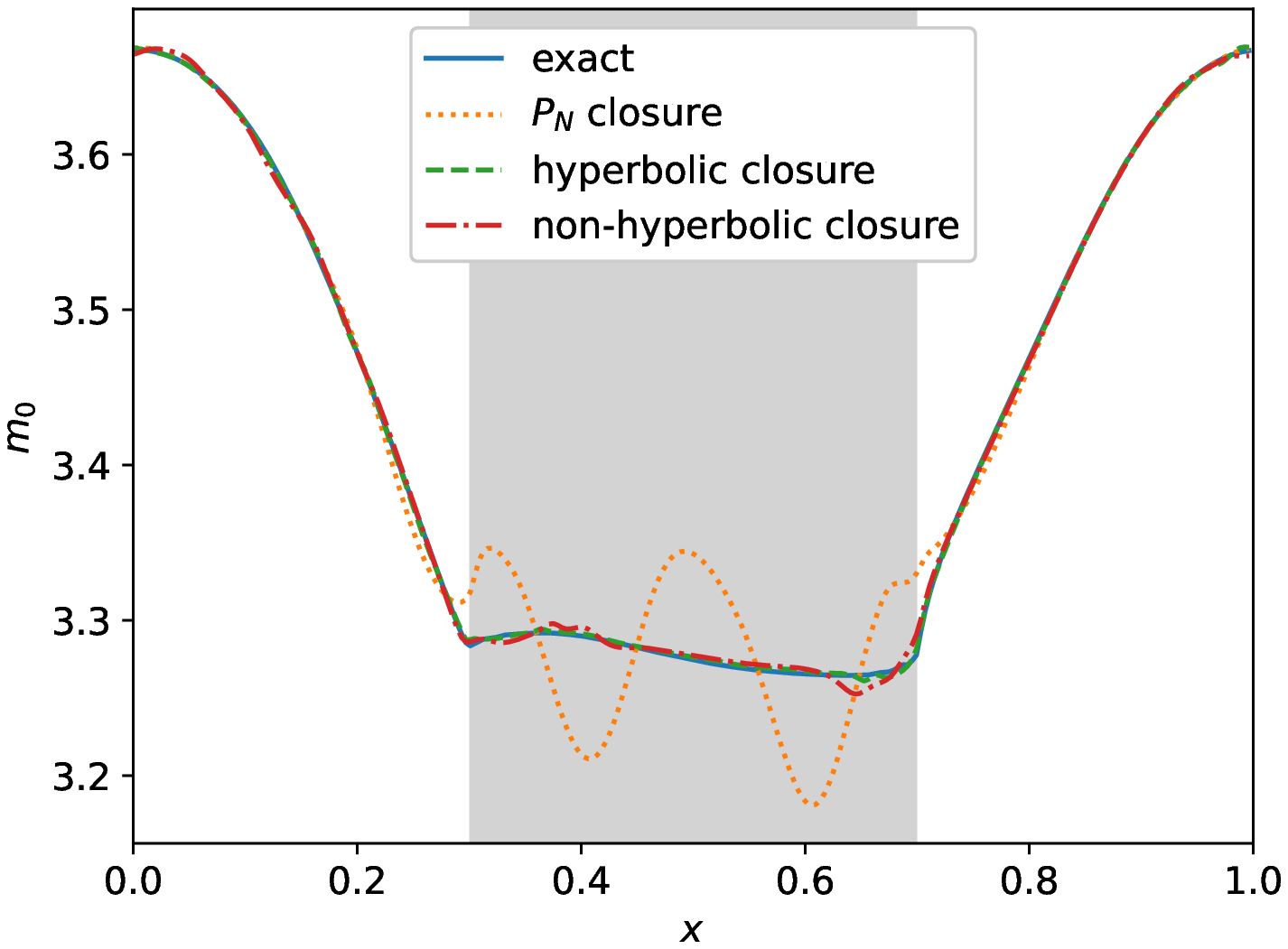}
	    \end{minipage}
	    }
	    \subfigure[$m_1$ at $t=0.5$]{
	    \begin{minipage}[b]{0.46\textwidth}    
	    \includegraphics[width=1\textwidth]{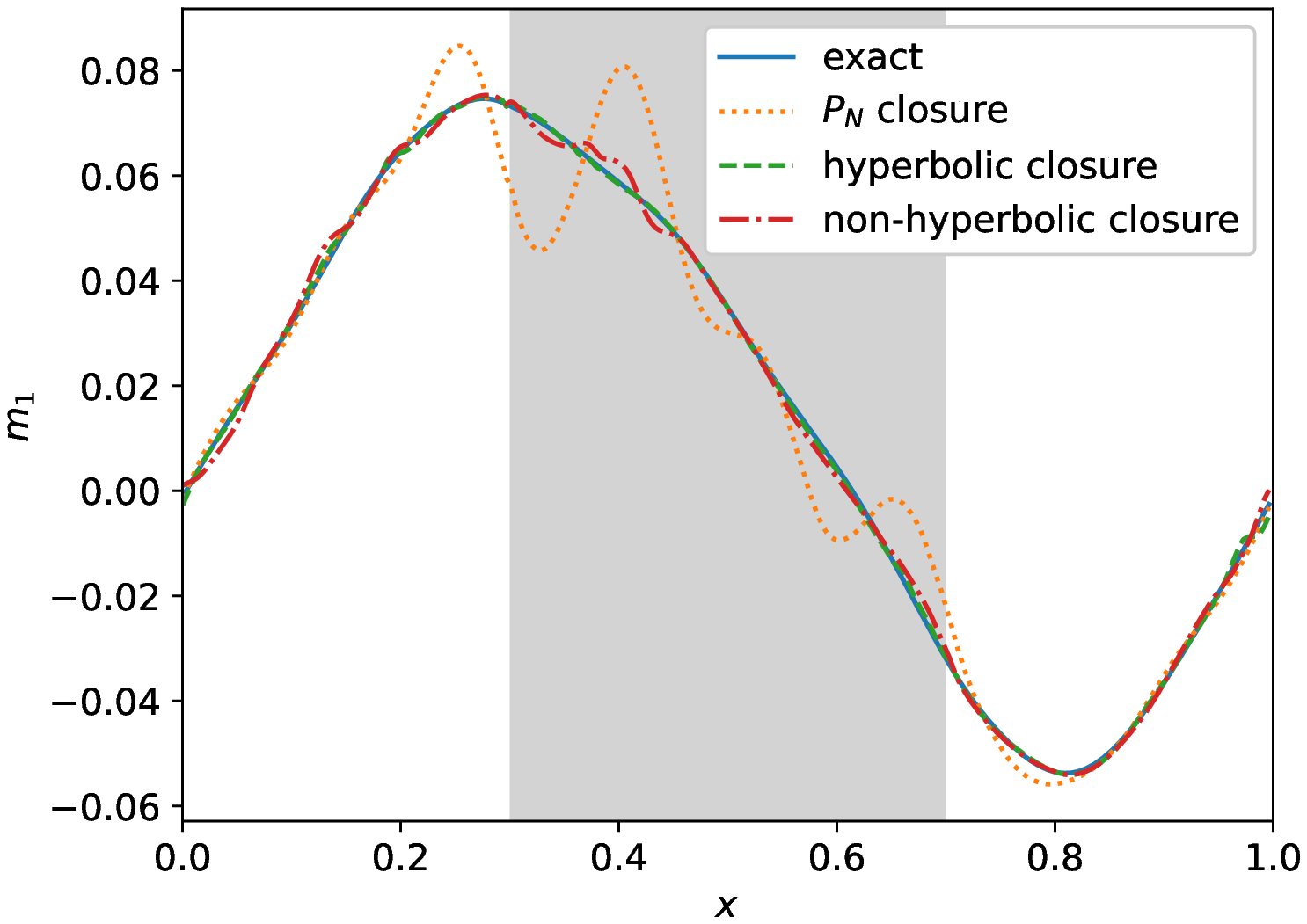}
	    \end{minipage}
	    }
	    \bigskip
	    \subfigure[$m_0$ at $t=1$]{
	    \begin{minipage}[b]{0.46\textwidth}
	    \includegraphics[width=1\textwidth]{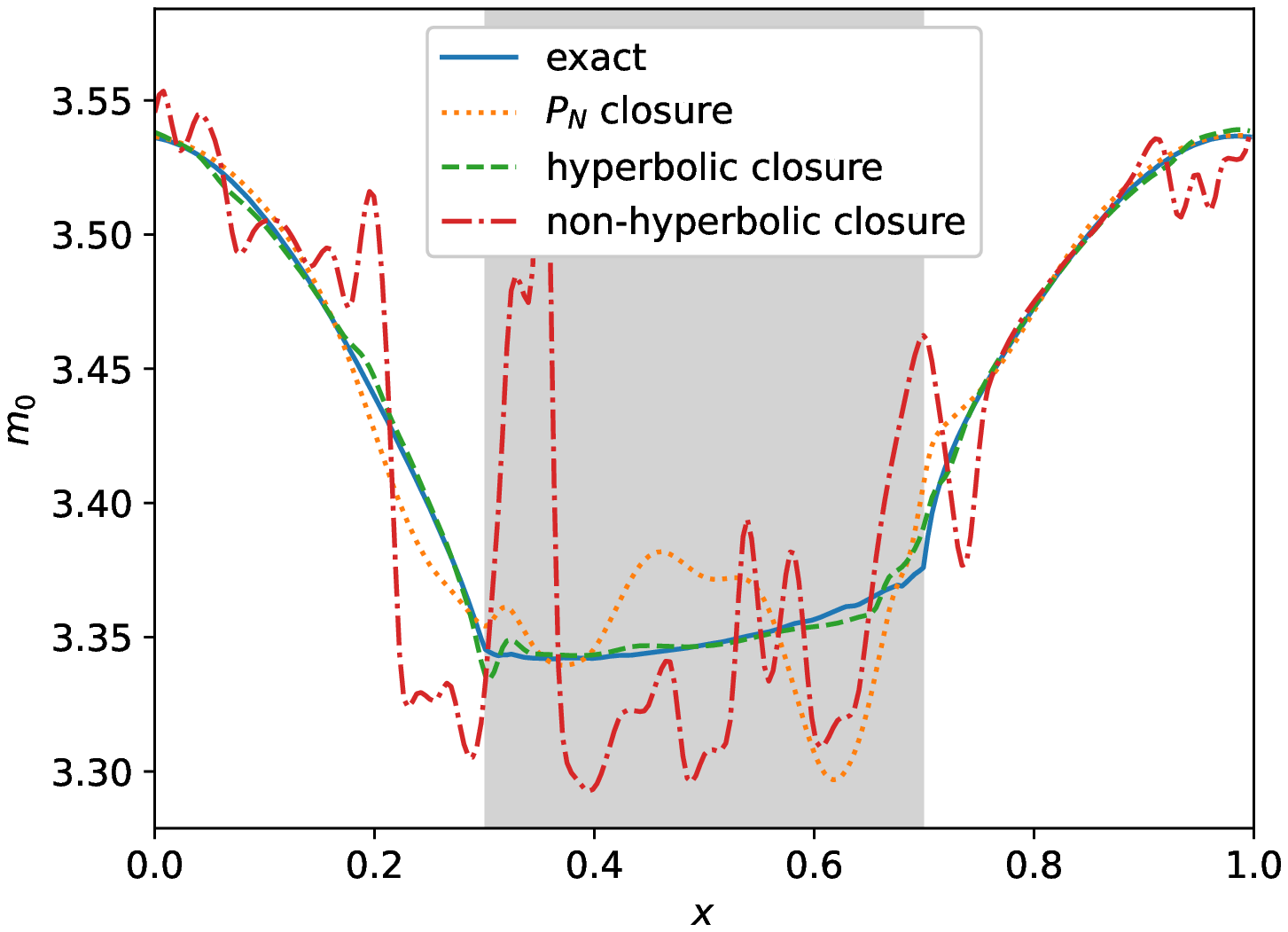}
	    \end{minipage}
	    }
	    \subfigure[$m_1$ at $t=1$]{
	    \begin{minipage}[b]{0.46\textwidth}    
	    \includegraphics[width=1\textwidth]{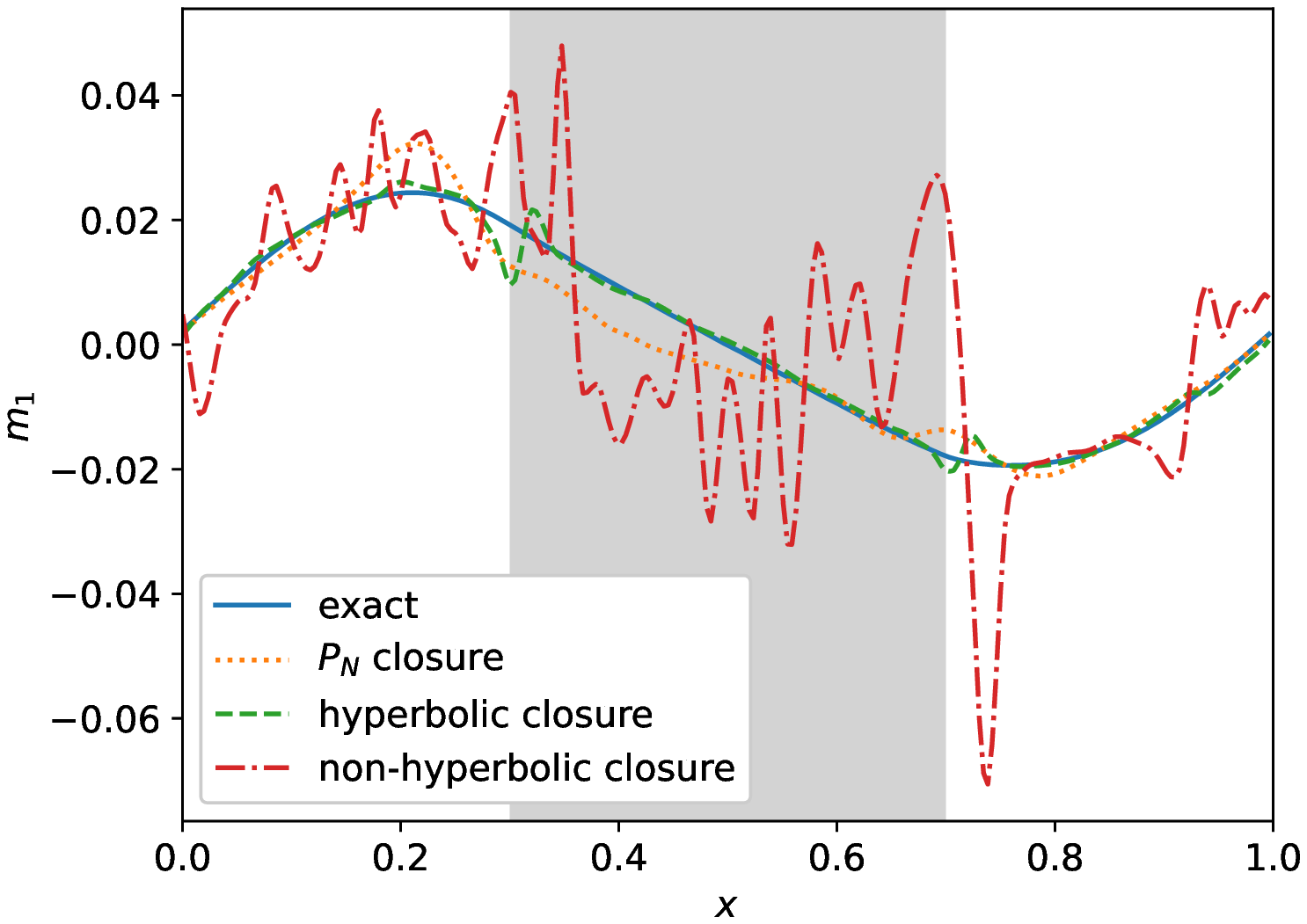}
	    \end{minipage}
	    }    
	    \caption{Example \ref{ex:two-material}:  two material problem. Numerical solutions of $m_0$ and $m_1$ at $t=0.5$ and $t=1$ with $N=6$. The gray part in the middle is in the optically thin regime and the other part is in the intermediate regime.}
	    \label{fig:two-material-compare}
	\end{figure}

\end{exam}

\begin{exam}[diffusion limit]\label{ex:diffusion-limit}
    In the last example, we show that our hyperbolic ML closure model can capture the correct diffusion limit of the RTE. To verify this, we numerically solve the moment closure equation under a diffusive scaling \eqref{eq:moment-equation-diffusive} with the initial condition
    \begin{equation}
    \begin{aligned}
        m_0(x,0) &= \sin(2\pi x) + 2, \\
        m_k(x,0) &= 0, \quad k = 1,\cdots,N
    \end{aligned}
    \end{equation}
    and different values of $\varepsilon$. We take $\sigma_s=1$ and $\sigma_a=0$ on the computational domain. We also numerically solve the diffusion limit equation \eqref{eq:rte-diffusive-limit}.
    In Figure \ref{fig:diffusion-limit}, we show the numerical solutions of the ML moment closure model at $t=0.1$ with $\varepsilon=0.5, 0.1, 0.05, 0.01$. We observe that the numerical solution $m_0$ of the closure models converges to that of the diffusion equation \eqref{eq:rte-diffusive-limit} as $\varepsilon$ approaches zero. This  validates the formal asymptotic analysis in Section \ref{sec:diffusion-limit}.
    \begin{figure}
        \centering
        \includegraphics[width=0.5\textwidth]{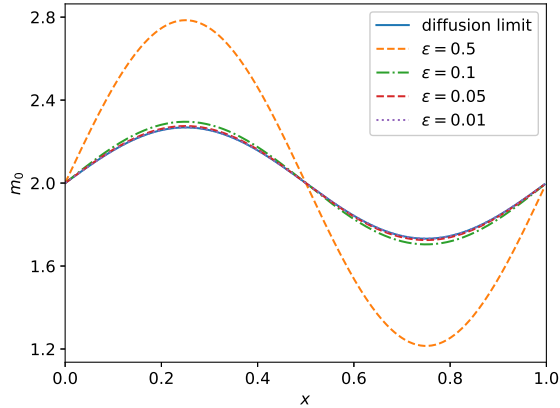}
        \caption{Example \ref{ex:diffusion-limit}: diffusion limit: the solution to the diffusion equation \eqref{eq:rte-diffusive-limit}; other lines: the solutions to the ML moment closure model \eqref{eq:moment-equation-diffusive} with $\varepsilon=0.5, 0.1, 0.05, 0.01$, $t=0.1$.}
        \label{fig:diffusion-limit}
    \end{figure}

\end{exam}

\section{Concluding remarks}\label{sec:conclusion}

In this paper, we propose a method to enforce the global hyperbolicity of the ML closure model. We find a symmetrizer (a symmetric positive definite matrix) for the closure system, and derive constraints that guarantee the system is globally symmetrizable hyperbolic. Moreover, we show that the closure system also inherits the dissipativeness of the RTE by checking the structural stability condition. A variety of benchmark tests including  Gaussian source problem and the two-material problem show good accuracy, correct diffusion limit and generalization ability of our ML closure model. The new approach also offers long time stability by ensuring mathematical consistency between the closure system and the macroscopic model.  Further, the new method demonstrates the plausibility of capturing kinetic effects in a moment system with a handful of moments and an appropriate closure model.

There are several issues that are worthy of further investigations. First, one could in principle generalize the result in Theorem \ref{thm:hyperbolic} to more than 4 degrees of freedom, by following the same lines in the proof given in Appendix \ref{sec:appendix-proof}. In this general case, the hyperbolicity constraints will be a set of implicit inequalities. How to incorporate the constraint of implicit inequalities into the architecture of the neural network would be an interesting topic to explore. Moreover, the moment closure model is expected to have better accuracy with more degrees of freedom. 
Another topic is that the characteristic speed of the current model may exceed the physical bound, as we observed in the numerical tests. This unphysical characteristic speed results in a small time step size in the computation, and is something we would like to address in our future work. Another interesting topic is to extend the current approach to the two dimensional case. These issues constitute the body of our ongoing work.

\section*{Acknowledgment}

We thank Michael M. Crockatt from Sandia National Laboratories for providing a numerical solver for the radiative transfer equations. We also would like to acknowledge the High Performance Computing Center (HPCC) at Michigan State University for providing computational resources that have contributed to the research results reported within this paper.

\begin{appendices}

\section{Hyperbolicity}\label{sec:appendix-hyperbolic}

In this part, we review the definition of the hyperbolicity and some equivalent conditions. See also the details in Chapter 3 in \cite{serre1999systems}.

Consider the first-order system of equations in 1D:
\begin{equation}\label{eq:appendix-def-hyperbolic}
    U_t + A(U) U_{x} = Q(U).
\end{equation}
Here $U=U(x,t)$ is an unknown $n$-vector valued function, $Q(U)$ and $A(U)$ are given $n$-vector and $n\times n$-matrix valued smooth functions of $U\in\mathcal{G}$ (an open subset of $\mathbb{R}^n$ called state space), respectively.
\begin{de}[hyperbolic]
    The system \eqref{eq:appendix-def-hyperbolic} is hyperbolic at $U_0$ if the matrix $ A(U_0)$ is real diagonlizable. The system \eqref{eq:appendix-def-hyperbolic} is  globally hyperbolic if it is hyperbolic at any $U_0\in\mathcal{G}$.
\end{de}
\begin{de}[symmetrizable hyperbolic]
    The system \eqref{eq:appendix-def-hyperbolic} is called symmetrizable hyperbolic if there exists a symmetric positive definite matrix $A_0$ such that $A_0A$ is symmetric.
\end{de}
The following classical conclusion holds for these two definitions:
\begin{prop}
    The symmetrizable hyperbolicity is a necessary and sufficient condition for the first-order system \eqref{eq:appendix-def-hyperbolic} to be hyperbolic.
\end{prop}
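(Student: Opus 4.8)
The plan is to prove the two implications separately, treating $A=A(U_0)$ as a fixed real $n\times n$ matrix and using ``real diagonalizable'' to mean similar over $\mathbb{R}$ to a real diagonal matrix (equivalently, $A$ has real eigenvalues and a full set of real eigenvectors). The only external facts I would invoke are the spectral theorem (a real symmetric matrix is orthogonally, hence real, diagonalizable) and the existence of a unique SPD square root of an SPD matrix; both are standard and can be cited from Chapter 3 of \cite{serre1999systems}.

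For sufficiency (symmetrizable $\Rightarrow$ hyperbolic), I would start from an SPD matrix $A_0$ with $(A_0A)^T=A_0A$. Since $A_0$ and $A_0A$ are symmetric this rewrites as $A^TA_0=A_0A$. Because $A_0$ is SPD it has a symmetric invertible square root $A_0^{1/2}$, so I introduce the conjugated matrix $S:=A_0^{1/2}AA_0^{-1/2}$. A short computation using $A^T=A_0AA_0^{-1}$ gives $S^T=A_0^{-1/2}A^TA_0^{1/2}=A_0^{-1/2}(A_0AA_0^{-1})A_0^{1/2}=A_0^{1/2}AA_0^{-1/2}=S$, so $S$ is real symmetric and therefore real diagonalizable by the spectral theorem. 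Since $A=A_0^{-1/2}SA_0^{1/2}$ is similar to $S$, it inherits real diagonalizability, i.e. the system is hyperbolic.

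For necessity (hyperbolic $\Rightarrow$ symmetrizable), I would write $A=P\Lambda P^{-1}$ with $P$ real invertible and $\Lambda$ real diagonal. The decisive step is to guess the right symmetrizer: take $A_0:=(P^{-1})^TP^{-1}$, which is symmetric and positive definite because $x^TA_0x=\norm{P^{-1}x}^2>0$ for $x\ne0$. Then $A_0A=(P^{-1})^TP^{-1}P\Lambda P^{-1}=(P^{-1})^T\Lambda P^{-1}$, and since $\Lambda^T=\Lambda$ this product is manifestly symmetric. Thus $A_0$ is the desired SPD symmetrizer and the system is symmetrizable hyperbolic.

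This equivalence is classical and no single step is a genuine obstacle; the points that require care are justifying the existence and symmetry of the SPD square root $A_0^{1/2}$ used in the first direction, and identifying the correct candidate symmetrizer $A_0=(P^{-1})^TP^{-1}$ built from the eigenvector matrix in the second. Both directions then reduce to a one-line verification, so I would keep the exposition brief and defer the standard linear-algebra facts to \cite{serre1999systems}.
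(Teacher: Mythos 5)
Your proof is correct and takes essentially the same approach as the paper: for sufficiency, both conjugate $A$ by a symmetric square root of $A_0$ (your $A_0^{1/2}$, the paper's $B$ with $A_0=B^2$) to produce a real symmetric, hence real diagonalizable, matrix similar to $A$; for necessity, both build the symmetrizer from the eigenvector matrix, your $A_0=(P^{-1})^TP^{-1}$ with $A=P\Lambda P^{-1}$ being identical to the paper's $A_0=P^TP$ with $A=P^{-1}DP$ up to the naming convention for $P$.
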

\begin{proof}
We start by the proof of sufficiency.
    By the definition of symmetrizable hyperbolic, there exists a symmetric positive definite (SPD) matrix $A_0$ such that $A_0A$ are symmetric. Since $A_0$ is a SPD matrix, there exists an invertible symmetric matrix $B$ satisfying $A_0=B^2$. Then, we compute
    \begin{equation}
        B A B^{-1}
        = B A_0^{-1} A_0 A B^{-1}
        =  B^{-1} (A_0 A) B^{-1}.
    \end{equation}
    Thus, $B A B^{-1}$ is symmetric since $A_0 A$ and $B$ are symmetric. Then we have $A$ is real diagonlizable.
    
    On the other hand, suppose that $A$ is real diagonlizable, there exist an invertible real matrix $P$ such that $A=P^{-1}DP$ with $D$ a real diagnoal matrix. Take $A_0 = P^{T} P$. Then $A_0$ is a SPD matrix and $A_0 A=P^TDP$  is symmetric.
\end{proof}

\section{Proof of Theorem \ref{thm:hyperbolic}}\label{sec:appendix-proof}

\begin{proof}
	To save space, we only present the proof in the case of $a_{N-3}=0$. One also can prove the case of $a_{N-3}\ne 0$ by following the same line.

	We first write $A$ in \eqref{eq:thm-jacobi-matrix} into a block matrix:
	\begin{equation}
		A = 
		\begin{pmatrix}
		A_1 & A_2 \\
		A_3 & A_4
		\end{pmatrix}
	\end{equation}
	with $A_1\in\mathbb{R}^{(N-1)\times(N-1)}$, $A_2\in\mathbb{R}^{(N-1)\times2}$, $A_3\in\mathbb{R}^{2\times(N-1)}$ and $A_4\in\mathbb{R}^{2\times2}$. Then we compute $A_0 A$:
	\begin{equation}
		A_0 A = 
		\begin{pmatrix}
		D & 0_{(N-2)\times 2} \\
		0_{2\times(N-2)} & B
		\end{pmatrix}
		\begin{pmatrix}
		A_1 & A_2 \\
		A_3 & A_4
		\end{pmatrix}
		=
		\begin{pmatrix}
		D A_1 & D A_2 \\
		B A_3 & B A_4
		\end{pmatrix}
	\end{equation}
	Here, $0_{m\times n}$ denote the zero matrix of size $m\times n$.
	It is easy to see that $D A_1$ is symmetric. 
	
	Next, we compute other blocks in $A_0 A$:
	\begin{equation}
		D A_2 = 
		\diag(1,3,5,\cdots,2N-3)
		\begin{pmatrix}
		0_{(N-2)\times 1} & 0_{(N-2)\times 1} \\
		\frac{N-1}{2N-3} & 0
		\end{pmatrix}
		= 
		\begin{pmatrix}
		0_{(N-2)\times 1} & 0_{(N-2)\times 1} \\
		N-1 & 0
		\end{pmatrix}
	\end{equation}	
	Let
	\begin{equation}
		B = 
		\begin{pmatrix}
		b_{11} & b_{12} \\
		b_{12} & b_{22}
		\end{pmatrix}.
	\end{equation}
	Then, we compute
	\begin{equation}
		B A_3 =
		\begin{pmatrix}
		b_{11} & b_{12} \\
		b_{12} & b_{22}
		\end{pmatrix}
		\begin{pmatrix}
		0_{1\times(N-2)} & \frac{N-1}{2N-1} \\
		0_{1\times(N-2)} & a_{N-2}
		\end{pmatrix}
		=
		\begin{pmatrix}
		0_{1\times(N-2)} & b_{11} \frac{N-1}{2N-1} + b_{12} a_{N-2} \\
		0_{1\times(N-2)} & b_{12} \frac{N-1}{2N-1} + b_{22} a_{N-2}
		\end{pmatrix}
	\end{equation}
	and
	\begin{equation}
		B A_4 =
		\begin{pmatrix}
		b_{11} & b_{12} \\
		b_{12} & b_{22}
		\end{pmatrix}
		\begin{pmatrix}
		0 & \frac{N}{2N-1} \\
		a_{N-1} & a_{N}
		\end{pmatrix}
		=
		\begin{pmatrix}
		b_{12}a_{N-1} & b_{11}\frac{N}{2N-1} + b_{12}a_N \\
		b_{22}a_{N-1} & b_{12}\frac{N}{2N-1} + b_{22}a_N
		\end{pmatrix}.		
	\end{equation}
	For $A_0 A$ to be symmetric, we need $BA_4$ is symmetric and $DA_2=(BA_3)^T$. Thus, we have the constraints
	\begin{equation}
	\begin{aligned}
		b_{11} \frac{N-1}{2N-1} + b_{12} a_{N-2} &= N-1, \\
		b_{12} \frac{N-1}{2N-1} + b_{22} a_{N-2} &= 0, \\
		b_{11}\frac{N}{2N-1} + b_{12} a_N &= b_{22} a_{N-1}.
	\end{aligned}
	\end{equation}
	We solve for $b_{11}$, $b_{12}$ and $b_{22}$ from the above linear system:
	\begin{equation}\label{eq:solution-b-linear-system}
	\begin{aligned}
		b_{11} &= \frac{(N-1)(2N-1)((N-1)a_{N-1} + (2N-1)a_{N-2}a_N)}{-N(2N-1)a_{N-2}^2 + (N-1)^2a_{N-1} + (N-1)(2N-1)a_{N-2}a_N}, \\
		b_{12} &= \frac{-N(N-1)(2N-1)a_{N-2}}{-N(2N-1)a_{N-2}^2 + (N-1)^2a_{N-1} + (N-1)(2N-1)a_{N-2}a_N}, \\
		b_{22} &= \frac{N(N-1)^2}{-N(2N-1)a_{N-2}^2 + (N-1)^2a_{N-1} + (N-1)(2N-1)a_{N-2}a_N}.
	\end{aligned}
	\end{equation}
	
	An equivalent condition for $B$ to be a SPD matrix is
	\begin{equation}
        b_{11} > 0, \quad b_{22}>0, \quad b_{11}b_{22}>b_{12}^2.
	\end{equation}
	From the expressions of $b_{11}$ and $b_{22}$ in \eqref{eq:solution-b-linear-system} and $b_{11}, b_{22}>0$ and $N\ge2$, we have
	\begin{equation}\label{eq:constraint-b11-positive}
		-N(2N-1)a_{N-2}^2 + (N-1)^2a_{N-1} + (N-1)(2N-1)a_{N-2}a_N > 0,		
	\end{equation}
	and
	\begin{equation}\label{eq:constraint-b22-positive}
		(N-1)a_{N-1} + (2N-1)a_{N-2}a_N >0.
	\end{equation}
	Plugging $b_{11}$, $b_{12}$ and $b_{12}$ in \eqref{eq:solution-b-linear-system} into $b_{11}b_{22}>b_{12}^2$, we further derive
	\begin{equation}\label{eq:constraint-b12-positive}
		(N-1)(2N-1)((N-1)a_{N-1} + (2N-1)a_{N-2}a_N) N(N-1)^2 > (N(N-1)(2N-1)a_{N-2})^2.
	\end{equation}
	It is easy to check that \eqref{eq:constraint-b11-positive} and \eqref{eq:constraint-b12-positive} are equivalent and \eqref{eq:constraint-b11-positive} implies \eqref{eq:constraint-b22-positive}. Thus, the constraint is
	\begin{equation}
		a_{N-1} > \frac{2N-1}{(N-1)^2} a_{N-2} \brac{N a_{N-2} - (N-1) a_N}.
	\end{equation}
	This is just the condition \eqref{eq:constraint-dof4} with $a_{N-3}=0$ (or equivalently \eqref{eq:constraint-dof3}). Finally, \eqref{eq:constraint-dof4-equivalent} can be easily derived, and the proof is omitted for brevity.
\end{proof}

\section{Structural stability condition}\label{sec:appendix-structure-stability}

In this part, we review the structural stability condition proposed in \cite{yong1999singular}, which is fundamental for the quasilinear first-order hyperbolic system with source terms:
\begin{equation}
	U_t + A(U) U_x = Q(U).
\end{equation}
Here $U=U(x,t)$ is an unknown $n$-vector valued function, $Q(U)$ and $A(U)$ are given $n$-vector and $n\times n$-matrix valued smooth functions of $U\in\mathcal{G}$ (an open subset of $\mathbb{R}^n$ called state space), respectively.
This structural stability condition was established for the hyperbolic system with source term in the multidimensional case. Here, we only show the condition in one-dimensional case for simplicity.

Define the equilibrium manifold
\begin{equation}
	\mathcal{E} = \{ U\in \mathcal{G} ~|~ Q(U) = 0 \}.
\end{equation}
The stability condition in \cite{yong1999singular} reads as:
\begin{enumerate}[label=(\roman*).]
	\item There exists an invertible $n\times n$ matrix $P(U)$ and an invertible $r\times r$ matrix $S(U)$, defined on the equilibrium manifold $\mathcal{E}$, such that
	\begin{equation}
		P(U)Q_U(U) = 
		\begin{pmatrix}
		0 & 0 \\
		0 & S(U)
		\end{pmatrix}
		P(U), \quad \textrm{for} ~ U\in\mathcal{E}.
	\end{equation}	

	\item There is a symmetric positive definite matrix $A_0(U)$ such that 
	\begin{equation}
		A_0(U) A(U) = A(U)^T A_0(U), \quad \textrm{for} ~ U\in\mathcal{G}.
	\end{equation}

	\item The hyperbolic part and the source term are coupled in the sense
	\begin{equation}
		A_0(U) Q_U(U) + Q_U^T(U) A_0(U) \le - P^T(U)
		\begin{pmatrix}
		0 & 0 \\
		0 & I_r
		\end{pmatrix}
		P(U), \quad \textrm{for} ~ U\in\mathcal{E}.
	\end{equation}
	Here $Q_U=\frac{\partial Q}{\partial U}$ and $I_r$ is the identity matrix of order $r$.
\end{enumerate}

This set of conditions has been tacitly respected by many well-developed physical models \cite{yong2008interesting}. The first condition is classical for initial value problems of the system of ordinary differential equations (ODEs), while the second one means the symmetrizable hyperbolicity of the PDE system. The third condition characterizes a kind of coupling between the convection term and the source term. This set of conditions implies the existence and stability of the zero relaxation limit of the corresponding initial value problems \cite{yong1999singular}.

\end{appendices}


\end{document}